\newtheorem{theorem}{Theorem}[section]
\newtheorem{lemma}[theorem]{Lemma}
\newtheorem{assumption}[theorem]{Assumption}
\newcommand{\N}{\mathbb N}
\newcommand{\Z}{\mathbb Z}
\newcommand{\R}{\mathbb R}
\newcommand{\domain}{\mathrm{D}}
\newcommand{\cu}{\operatorname{curl}}
\newcommand{\di}{\operatorname{div}}
\newcommand{\trt}{\gamma_\mathrm{t}}
\DeclareMathOperator*{\essinf}{ess\,inf}
\newcommand{\vecA}{\vec{\mathcal A}}
\renewcommand{\vec}[1]{\underline{#1}}
\newcommand{\abs}[1]{\left\lvert{#1}\right\rvert}
\newcommand{\norm}[1]{{\left\lVert{#1}\right\rVert}}
\begin{document}

\title{Numerical study of conforming space-time methods for Maxwell's equations}
\author{Julia I.M.~Hauser$^1$, Marco~Zank$^2$}
\date{
	$^1$Institut für Wissenschaftliches Rechnen, \\
	Technische Universität Dresden, \\
	Zellescher Weg 25, 01217 Dresden, Germany \\[1mm]
	{\tt julia.hauser@tu-dresden.de}  \\[3mm]
	$^2$Fakult\"at f\"ur Mathematik, Universit\"at Wien, \\
	Oskar-Morgenstern-Platz 1, 1090 Wien, Austria \\[1mm]
	{\tt marco.zank@univie.ac.at}
}

\maketitle

%-------------------------------------------------------------------------------
\begin{abstract}
  Time-dependent Maxwell's equations govern electromagnetics. Under certain conditions, we can rewrite these equations into a partial differential equation of second order, which in this case is the vectorial wave equation. For the vectorial wave, we investigate the numerical application and the challenges in the implementation. For this purpose, we consider a space-time variational setting, i.e. time is just another spatial dimension. More specifically, we apply integration by parts in time as well as in space, leading to a space-time variational formulation with different trial and test spaces. Conforming discretizations of tensor-product type result in a Galerkin--Petrov finite element method that requires a CFL condition for stability. For this Galerkin--Petrov variational formulation, we study the CFL condition and its sharpness. To overcome the CFL condition, we use a Hilbert-type transformation that leads to a variational formulation with equal trial and test spaces. Conforming space-time discretizations result in a new Galerkin--Bubnov finite element method that is unconditionally stable. In numerical examples, we demonstrate the effectiveness of this Galerkin--Bubnov finite element method. Furthermore, we investigate different projections of the right-hand side and their influence on the convergence rates.
      
  This paper is the first step towards a more stable computation and a better understanding of vectorial wave equations in a conforming space-time approach.
\end{abstract}
%-------------------------------------------------------------------------------

%-------------------------------------------------------------------------------
\section{Introduction}
	
	The time-dependent Maxwell's equations and their discretization are required in many electromagnetic applications. One application is the modeling of an electric motor which is governed by the Ampère--Maxwell equation. The Ampère--Maxwell equation relates the time-dependent electric field with the current density and the magnetic field. If the corresponding space-time domain is star-shaped with respect to a ball, then we can rewrite Maxwell's system into the vectorial wave equation.
	Application of this technique in the static case is investigated in \cite{TIEGNA2013162}. Extensive studies of the static case can be found not only for analytic methods, but also for the approximation by finite elements, see e.g. \cite{ReviewElectricEdgeFEM}, or in a more applied static nonlinear example \cite{FESurrogateModel}. Moreover, the quasi-static case is examined, see \cite{3dimVectorPot}.
	
	However, the complete space-time setting of these equations has been studied in less detail. Most approaches for time-dependent Maxwell's equation are either dealing with the frequency domain, see \cite{Monk2003}, or using
	different types of	time-stepping methods, see \cite[Chapter~12.2]{Jin}. For the latter, proving stability is delicate. One solution is to stabilize the systems which stem from time-stepping methods. The development of such stabilized methods is an active research field, see e.g. \cite{XIE2021109896} on energy-preserving meshing for FDTD schemes or \cite{bai2022second} for the Cole--Cole model which involves polarization. Other classical time-stepping methods are the so-called leapfrog and the Newmark-beta methods, where the first is a special case of the second. Comparison and improvement of these methods can be found in \cite{crawford2020unconditionally}. Another class of time-stepping approaches is locally implicit time integrators, see e.g. \cite{HochbruckSturm2019} and references there.
	
	An alternative to time-stepping methods is a space-time approach. Most approaches apply discontinuous Galerkin methods to Maxwell's equations, see \cite{AbediMudaliar2017, dorfler2016space, EggerKretzschmarSchneppWeiland2015, LilienthalSchneppWeiland2014, XieWangZhang2013} and references there. Note that these methods are non-conforming in general.
	
	In this paper, we derive conforming space-time finite element discretizations for the vectorial wave equation without introducing additional unknowns, i.e. without a reformulation of the vectorial wave equation as a first-order system. The advantage of staying with the second-order formulation and of conforming methods is the need for a smaller number of degrees of freedom compared with first-order formulations or discontinuous Galerkin methods. First, we state the variational formulation for different trial and test spaces, i.e. a Galerkin--Petrov formulation. Second, using the so-called modified Hilbert transformation $\mathcal H_T$, introduced in \cite{SteinbachZankETNA2020, ZankDissBuch2020}, we present a variational formulation with equal trial and test spaces, i.e. a Galerkin--Bubnov formulation. The main goal of this paper is to derive and describe in detail these two conforming space-time methods, the assembling of the resulting linear systems and their comparison concerning stability and convergence. In more detail, we investigate the Galerkin--Petrov formulation and the corresponding CFL condition. Additionally, we elaborate on a Galerkin--Bubnov formulation using the modified Hilbert transformation, where all necessary mathematical tools are developed. In the end, we compare the results of the conforming Galerkin--Petrov and Galerkin--Bubnov finite element methods concerning their stability and convergence behavior by considering numerical examples. Moreover, we investigate the convergence behavior of both conforming space-time approaches when the right-hand side is approximated by two different projections.
	
	Before we introduce the space-time approach, we derive the vectorial wave equation from Maxwell's equations to get a better understanding of the properties of the equation. We state the main ideas but refer to \cite{HauserOhm2023,stern} and references for a more detailed derivation. To derive the vectorial wave equation, we need to rewrite Maxwell's equations using differential forms in a Lipschitz domain $Q\subset\R^4$. Hence, we define the Faraday 2-form $F := e \wedge  \mathrm{d}t + b,$	where $e$ is the $1$-form corresponding to the electric field $\vec E$ and $b$ the $2$-form corresponding to the magnetic flux density $\vec B$. Additionally, we define the Maxwell 2-form $G:= {h}\wedge  \mathrm{d}t - \tilde d$ and the source 3-form $\mathcal{J}:= \tilde{j} \wedge  \mathrm{d}t - \tilde{\rho},$ where $h$ is the $1$-form corresponding to the magnetic field $\vec H$, $\tilde d$ is the $2$-form corresponding to the electric flux density $\vec D$, $\tilde{j}$ is the $2$-form corresponding to the given electric current density $\vec j$ and $\tilde{\rho}$ is the $3$-form corresponding to the given charge density~$\rho$. Following the derivations in \cite{stern}, Maxwell's equations allow for the representation
	\begin{equation}  \label{Einf:eqn:Maxwell4D}
		\left.
		\begin{array}{rcl}
			\mathrm{d}\ F &=& 0,\\
			\mathrm{d}\ G &=& \mathcal{J},\\
			G &=& \star^{\epsilon,-\mu^{-1}} F,
		\end{array}
		\right \}
	\end{equation}
	where the four-dimensional exterior derivative $\mathrm{d}$ is deployed. Here, the Hodge star operator $\star^{\epsilon,-\mu^{-1}}$ is weighted by $\epsilon$ for the components of $(\mathrm{d}x^{01},\mathrm{d}x^{02},\mathrm{d}x^{03})^\top$ and by $(-\mu^{-1})$ for the components of $(\mathrm{d}x^{23},\mathrm{d}x^{31},\mathrm{d}x^{12})^\top$, considering the Euclidean metric in $\R^4.$

	Next, we derive the space-time vectorial wave equation from the 4D representation~\eqref{Einf:eqn:Maxwell4D} of Maxwell's equations.
	Since we assume that the domain is star-shaped with respect to a ball, the Poincar\'{e} lemma \cite[Theorem~4.1]{lang} is applicable. Hence, the closed form $F$ is exact and there is a potential $\mathcal A$, which is a $1$-form such that $\mathrm{d}\mathcal A = F.$
	From this, we also derive the following relations in the Euclidean metric 
	\begin{align*}
		\vec E &= -\partial_t \vec{A} +\nabla_x A_0,\\
		\vec B &= \cu_x\vec{A}.
	\end{align*}
	Here, the scalar function $A_0$ is the time component and the vector-valued function $\vec{A}:=(A_1,A_2,A_3)^\top$ is the spatial component of $\mathcal A$. If we insert this result into the second equation of \eqref{Einf:eqn:Maxwell4D}, combined with the third of \eqref{Einf:eqn:Maxwell4D}, we get the second-order differential equation
	\begin{equation}
		\label{Einf:eqn:PotentialEquation}
		\mathrm{d} \star^{\epsilon,-\mu^{-1}} \mathrm{d} \mathcal A =  \mathcal{J}.
	\end{equation}	
	
    We emphasize that there is no uniqueness of the potential $\mathcal A$. Indeed,  adding to $\mathcal A$ the exterior derivative of any $0$-form $\phi$ results again in a suitable potential $\tilde{\mathcal A}= \mathcal A+\mathrm{d}\phi$ which solves equation~\eqref{Einf:eqn:PotentialEquation}. This is equivalent to adding the space-time gradient  $\nabla_{(t,x)} \phi$ of a function $\phi \in H^1_0(Q)$ to $(A_0,\vec A)^\top$, with the usual Sobolev space $H^1_0(Q)$. In physics, this is called the \textit{gauge invariance}, which \textit{'is a manifestation of (the) nonobservability of $\mathcal{A}$'}, see \cite[p.~676]{jackson2001historical}. To make the potential $\mathcal A$ unique, we use gauging. A survey of different gauges can be found in \cite{jackson2001historical}. The choice of the gauge determines the resulting partial differential equation.
    In this paper, we deploy the Weyl gauge which is also named the temporal gauge, \cite{jackson2001historical}. The Weyl gauge is characterized by choosing $A_0=0$, i.e. the component of $\mathcal A$ in the time direction is zero. Applying it to \eqref{Einf:eqn:PotentialEquation} results in the vectorial wave equation.

	Before we state the vectorial wave equation in terms of the Euclidean metric, we introduce some notation that is used in this paper. First, for $d \in \{2,3\}$, the spatial bounded Lipschitz domain $\Omega \subset \R^d$ with boundary $\partial \Omega$ has the outward unit normal $\vec n_x \colon \, \partial \Omega \to \R^d$. Second, for a given terminal time $T>0$, we define the space-time cylinder $Q:=(0,T) \times \Omega \subset \R^{d+1}$ and its lateral boundary $\Sigma := [0,T]\times\partial\Omega \subset \R^{d+1}$. Next, we fix some notation, which differs for $d=2$ and $d=3.$ For this purpose, let $\vec f \colon \, Q \to \R^d$ be a sufficiently smooth, vector-valued function with components $f_\iota$, $\iota=1,\dots,d$, and $g \colon \, Q \to \R$ sufficiently smooth, scalar-valued function. We introduce the following notation:
	\begin{itemize}
	 \item \underline{$d=2$:} We define $\vec a \times \vec b = a_1 b_2 - a_2 b_1$ for vectors $\vec a = (a_1, a_2)^\top, \vec b=(b_1,b_2)^\top \in \R^2,$ and the spatial curl of $\vec f$ by the scalar function $\cu_x \vec f = \nabla_x \times \vec f = \partial_{x_1} f_2 - \partial_{x_2} f_1$ as well as the spatial curl of $g$ by the vector-valued function $\cu_x g=(\partial_{x_2} g, - \partial_{x_1} g)^\top.$
	 \item \underline{$d=3$:} In this case, the spatial curl of $\vec f$ is given by the vector-valued function
	\begin{equation*}
            \cu_x \, \vec f = \nabla_x \times \vec f = \left( \partial_{x_2} f_3 - \partial_{x_3} f_2, \partial_{x_3} f_1 - \partial_{x_1} f_3, \partial_{x_1} f_2 - \partial_{x_2} f_1 \right)^\top,
	\end{equation*}
	where $\times$ denotes the usual cross product of vectors in $\R^3.$
	\end{itemize}
    
	With this notation and the Weyl gauge, we rewrite equation~\eqref{Einf:eqn:PotentialEquation} into the vectorial wave equation for $d=2,3$ simply by inserting the definition of the exterior derivative in the Euclidean metric following \cite[Chapter~2]{ArnoldFalkWinther}. Hence, we want to find a function $\vec A \colon \overline{Q} \to \R^d$ such that
	\begin{equation}  \label{Einf:Maxwell}
		\left.
		\begin{array}{rcll}
			\partial_t \left( \epsilon \partial_t \vec A\right) + \cu_x \left(\mu^{-1} \cu_x \vec{A} \right) &=& \vec{j} &\text{ in } Q, \\
			\vec{A}(0,\cdot) &=& 0 &\text{ in } \Omega, \\
			\partial_t\vec{A}(0,\cdot) &=& 0 &\text{ in } \Omega, \\
			\trt\vec A&=& 0  &\text{ on } \Sigma,
		\end{array}
		\right \}
	\end{equation}
	where $\trt$ is the tangential trace operator, $\vec j \colon \, Q \to \R^d$ is a given current density, $\epsilon \colon \, \Omega \to \R^{d \times d}$ is a given permittivity, and $\mu \colon \, \Omega \to \R$ for $d=2$ and $\mu \colon \, \Omega \to \R^{3 \times 3}$ for $d=3$ is a given permeability.
	However, equation~\eqref{Einf:eqn:PotentialEquation} is a four-dimensional equation for a four-dimensional vector potential $(A_0,\underline{A})^\top$. Under the Weyl gauge, the fourth equation in \eqref{Einf:eqn:PotentialEquation} is $\di_x(\epsilon (\partial_t\vec{A}))=-\rho$  in $Q$. On the other hand, this equation holds
	as long as $\partial_t\underline{A}$ satisfies the initial condition $\di_x(\epsilon\partial_t\underline{A})(0,\cdot)=-\rho(0,\cdot)$ in $\Omega$, see \cite{HauserOhm2023} for more details. So, in case of $\partial_t\vec{A}(0,\cdot) = 0$ in $\Omega$ we are limited to examples where $\rho(0,\cdot) = 0$ in $\Omega$. However, using the same technique used for inhomogeneous Dirichlet boundary conditions in finite element methods for the Poisson's equation, see e.g.~\cite{ErnGuermond2020II}, we can extend the results of this paper to inhomogeneous initial data.	
	
	With this knowledge, we give an outline of this paper. In Section~\ref{Sec:FS}, we recall well-known function spaces, which are needed to state the trial and test spaces of the variational formulations. In Section~\ref{Sec:HT}, we introduce the modified Hilbert transformation which allows for a Galerkin--Bubnov formulation. Then, we state different variational formulations and their properties in Section \ref{Sec:VF}.
	Section~\ref{Sec:FES} is devoted to the finite element spaces. These approximation spaces are used for the space-time continuous Galerkin finite element discretizations in Section~\ref{Sec:FEM}, including investigations of a resulting CFL condition.
	Numerical examples for a two-dimensional spatial domain are presented in Section~\ref{Sec:Num}, which show the sharpness of the CFL condition and the different behavior of the numerical solutions when changing the projection of the right-hand side. Finally, we draw conclusions in Section~\ref{Sec:Zum}.

%-------------------------------------------------------------------------------
\section{Classical function spaces} \label{Sec:FS}
	
	For completeness, we recall classical function spaces, see \cite{Assous2018, ErnGuermond2020I, Monk2003, ZankDissBuch2020} for further details and references. For this purpose, let a bounded Lipschitz domain $\domain \subset \R^m$ for $m \in \N$ be fixed. We denote by $L^p(\domain)$, $1 \leq p \leq \infty$, the usual Lebesgue space with its norm $\norm{\cdot}_{L^p(\domain)}$, and by $H^k(\domain)$, $k \in \N$, the Sobolev space with Hilbertian norm $\norm{\cdot}_{H^k(\domain)}$. Further, the subspace $H^1_0(\domain) := \{ v \in H^1(\domain): v_{|\partial \domain}=0 \} \subset H^1(\domain)$ is endowed with the Hilbertian norm $\norm{\cdot}_{H^1_0(\domain)} := \abs{\cdot}_{H^1(\domain)} := \norm{\nabla_x (\cdot) }_{L^2(\domain)},$ which is actually a norm in $H^1_0(\domain)$ due to the Poincaré inequality.
	For the interval $\domain = (0,T)$, we write $L^2(0,T) := L^2((0,T))$, $H^k(0,T) := H^k((0,T))$, and the subspaces
	\begin{align*}
		H^1_{0,}(0,T) &:= \{ v \in H^1(0,T) : v(0) = 0 \} \subset H^1(0,T), \\
		H^1_{,0}(0,T) &:= \{ v \in H^1(0,T) : v(T) = 0 \} \subset H^1(0,T) 
	\end{align*}
	are equipped with the Hilbertian norm $\abs{\cdot}_{H^1(0,T)} := \norm{\partial_t (\cdot) }_{L^2(0,T)}$, which again, is actually a norm in $H^1_{0,}(0,T)$ and $H^1_{,0}(0,T)$ due to Poincaré inequalities. By interpolation, we introduce $H^s_{0,}(0,T) := [H^1_{0,}(0,T),L^2(0,T)]_s$ and $H^s_{,0}(0,T) := [H^1_{,0}(0,T),L^2(0,T)]_s$
	for $s \in [0,1]$.
	Further, the aforementioned spaces are generalized from real-valued functions $v \colon \, \domain \to \R$ to vector-valued functions $v \colon \, \domain \to X$ for a Hilbert space $X$ with inner product $(\cdot, \cdot)_X$. In particular, for $X = \R^n$ with $n \in \N$, the space $L^2(\domain;\R^n)$ is the usual Lebesgue space for vector-valued functions $\vec v \colon \, \domain \to \R^n$ endowed with the inner product $(\vec v,\vec w)_{L^2(\domain)} := (\vec v, \vec w)_{L^2(\domain;\R^n)}:= \int_\domain \vec v(x) \cdot \vec w(x) \mathrm dx$ for $\vec v, \vec w \in L^2(\domain;\R^n)$ and the induced norm $\norm{\cdot}_{L^2(\domain)} := \norm{\cdot}_{L^2(\domain; \R^n)} := \sqrt{(\cdot,\cdot)_{L^2(\domain)}}$. 
	
	With this notation, for a bounded Lipschitz domain $\Omega \subset \R^d$ with $d \in \{2,3\}$, we define the vector-valued Sobolev spaces
	\begin{equation*}
		H(\di;\Omega) := \left\{ \vec v \in L^2(\Omega;\R^d) : \di_x \vec v \in L^2(\Omega;\R) \right\}
	\end{equation*}
	endowed with the Hilbertian norm $\norm{\vec v}_{H(\di;\Omega)} := \left(  \norm{\vec v}_{L^2(\Omega)}^2 + \norm{\di_x \vec v}_{L^2(\Omega)}^2 \right)^{1/2},$ where $\di_x$ is the (distributional) divergence. Similarly, with the (distributional) curl, we set
	\begin{equation*}
		H(\cu;\Omega) := \begin{cases}
			\left\{ \vec v \in L^2(\Omega;\R^2) :  \cu_x \vec v \in L^2(\Omega; \R) \right\}, & d=2, \\
			\left\{ \vec v \in L^2(\Omega;\R^3) :  \cu_x \vec v \in L^2(\Omega; \R^3) \right\}, & d=3,
		\end{cases}
	\end{equation*}
	equipped with their natural Hilbertian norm $\norm{\vec v}_{H(\cu;\Omega)} := \left(  \norm{\vec v}_{L^2(\Omega)}^2 + \norm{\cu_x \vec v}_{L^2(\Omega)}^2 \right)^{1/2}.$

	Next, we introduce the tangential trace operator $\trt$ for $d \in \{2,3\}$, see \cite{Assous2018}, \cite[Section~4.3]{ErnGuermond2020I}, \cite[Theorem~2.11]{GiraultRaviart1986} and \cite[Subsection~3.5.3]{Monk2003} for details. For this purpose, we denote by $(\cdot)_{|\partial \Omega}$ the usual trace of a function as a linear, surjective mapping from $H^1(\Omega;\R^m)$ onto $H^{1/2}(\partial \Omega;\R^m)$ with $m \in \N$. Here, $H^{1/2}(\partial \Omega;\R^m)$, $m \in \N$, is the fractional-order Sobolev space equipped with the Sobolev–Slobodeckij norm, see \cite[Subsection~2.2.2]{ErnGuermond2020I}, and $H^{-1/2}(\partial \Omega;\R^m)$ is its dual space.
	To define the tangential trace operator $\trt$, we distinguish two cases:
	\begin{itemize}
            \item \underline{$d=2$:} For a function $\vec w \in H^1(\Omega;\R^2)$, we define the pointwise trace $\widetilde{\trt} \vec w = \vec w_{|\partial \Omega} \cdot \vec \tau_x$, where $\vec \tau_x $ is the unit tangent vector satisfying $\vec \tau_x \cdot \vec n_x = 0$. Then, the continuous mapping $\trt \colon \, H(\cu;\Omega) \to H^{-1/2}(\partial \Omega;\R)$ is the unique extension of $\widetilde{\trt}.$
            \item \underline{$d=3$:} Analogously, for a function $\vec w \in H^1(\Omega;\R^3)$, we set the pointwise trace as the function $\widetilde{\trt} \vec w = \vec w_{|\partial \Omega} \times \vec n_x$, and the continuous mapping $\trt \colon \, H(\cu;\Omega) \to H^{-1/2}(\partial \Omega;\R^3)$ as the unique extension of $\widetilde{\trt}$.
	\end{itemize}

    Having defined the tangential trace operator $\trt$ for $d \in \{2,3\}$, we introduce the closed subspace
	\begin{equation*}
		H_0(\cu;\Omega) := \left\{ \vec v \in H(\cu;\Omega): \trt \vec v = 0 \right\} \subset H(\cu;\Omega)
	\end{equation*}
	with the Hilbertian norm $\norm{\cdot}_{H_0(\cu;\Omega)} := \norm{\cdot}_{H(\cu;\Omega)}.$ Last, we recall that the set $C_0^\infty(\Omega;\R^d)$ of smooth functions with compact support is dense in $H_0(\cu;\Omega)$ with respect to $\norm{\cdot}_{H_0(\cu;\Omega)}$, see \cite[Theorem~2.12]{GiraultRaviart1986}.

%------------------------------------------------------------------------------
\section{Modified Hilbert transformation}  \label{Sec:HT}
	
	In this section, we introduce the modified Hilbert transformation
	${\mathcal{H}}_T$ as developed in \cite{SteinbachZankETNA2020, ZankDissBuch2020}, and state its main properties,
	see also \cite{SteinbachMissoni2022, SteinbachZankJNUM2021, ZankCMAM2021, ZankInt2022}. The 
	modified Hilbert transformation acts in time only. Hence, in this section, we consider functions $v \colon \, (0,T) \to \R$, where a generalization to functions in $(t,x)$ is straightforward because of the tensor product structure of the domain $Q$.
	
	For $v \in L^2(0,T)$, we consider the Fourier series expansion
	\[
	v(t) = \sum\limits_{k=0}^\infty v_k \sin \left( \left(
	\frac{\pi}{2} + k \pi \right) \frac{t}{T} \right), \quad
	v_k := \frac{2}{T} \int_0^T v(t) \, \sin \left( \left(
	\frac{\pi}{2} + k \pi \right) \frac{t}{T} \right) \, \mathrm dt,
	\]
	and we define the modified Hilbert transformation ${\mathcal{H}}_T$ as
	\begin{equation}\label{HT:Def_H_T}
		({\mathcal{H}}_Tv)(t) = \sum\limits_{k=0}^\infty v_k \cos \left( \left(
		\frac{\pi}{2} + k \pi \right) \frac{t}{T} \right), \quad t \in (0,T).
	\end{equation}
	Note that the functions $t \mapsto \sin \left( \left(\frac{\pi}{2} + k \pi \right) \frac{t}{T} \right)$, $k \in \N_0$, form an orthogonal basis of $L^2(0,T)$ and $H^1_{0,}(0,T)$, whereas the functions $t \mapsto \cos \left( \left(\frac{\pi}{2} + k \pi \right) \frac{t}{T} \right)$, $k \in \N_0$, form an orthogonal basis of $L^2(0,T)$ and $H^1_{,0}(0,T)$. Hence, the mapping ${\mathcal{H}}_T \colon \, H^s_{0,}(0,T) \to H^s_{\,,0}(0,T)$ is an isomorphism for $s \in [0,1]$, where the inverse is the $L^2(0,T)$-adjoint,
	i.e.
	\begin{equation}  \label{HT:HTMinus1}
		( {\mathcal{H}_T}v , w )_{L^2(0,T)} =
		( v , {\mathcal{H}}_T^{-1} w )_{L^2(0,T)}
	\end{equation}
    for all $ v,w \in L^2(0,T)$. In addition, the relations
	\begin{align}
		( v , {\mathcal{H}}_T v )_{L^2(0,T)}
		&> 0 &\text{for }& 0\neq v \in H^s_{0,}(0,T), 0 < s \leq 1, \label{HT:positiv} \\
		\partial_t {\mathcal{H}}_T v
		&= - {\mathcal{H}}_T^{-1} \partial_t v \, \text{ in } L^2(0,T)
		&\text{for }& v \in H^1_{0,}(0,T)  \label{HT:vertauschen}
	\end{align}
	hold true. For the proofs of these aforementioned properties, we refer to
	\cite{SteinbachMissoni2022, SteinbachZankETNA2020, SteinbachZankJNUM2021, ZankDissBuch2020,   ZankCMAM2021, ZankInt2022}.
	Furthermore, the modified Hilbert transformation \eqref{HT:Def_H_T}
	allows a closed representation \cite[Lemma 2.8]{SteinbachZankETNA2020} as
	Cauchy principal value integral, i.e. for $v \in L^2(0,T)$,
	\[
	({\mathcal{H}}_T v)(t) =  {\mathrm  {v.p.}} \int_0^T \frac{1}{2T}
	\left(
	\frac{1}{\sin \frac{\pi(s+t)}{2T}} +
	\frac{1}{\sin \frac{\pi(s-t)}{2T}}
	\right) v(s) \, \mathrm ds, \quad t \in (0,T).
	\]
	Further integral representations of $\mathcal H_T$ are contained in \cite{SteinbachZankJNUM2021, ZankInt2022}.

%-------------------------------------------------------------------------------
\section{Space-time variational formulations}  \label{Sec:VF}
	
	In this section, we state space-time variational formulations of the vectorial wave equation~\eqref{Einf:Maxwell}.
	For this purpose, we recall a variational setting with different trial and test spaces as well as establish the tools to design a new space-time variational formulation of the vectorial wave equation~\eqref{Einf:Maxwell} with equal trial and test spaces.
	The latter space-time variational formulation is derived by using the modified Hilbert transformation $\mathcal H_T$ for the temporal part as introduced in Section~\ref{Sec:HT}.

	To deploy the existence and uniqueness result of \cite{HauserOhm2023} to the stated variational setting, we make the following assumptions, which are assumed for the rest of this work.
	\begin{assumption}[{\cite[Assumption~1]{HauserOhm2023}}] \label{VF:Voraussetzung}
		Let the spatial domain $\Omega \subset \R^d$, $d=2,3$, be given such that
		\begin{itemize}
			\item $\Omega$ is a bounded Lipschitz domain,
			\item and $Q := (0,T) \times \Omega$ is star-shaped with respect to a ball $B$, i.e. the convex hull of $B$ and each point in $Q$ is contained in $Q$.
		\end{itemize}
		Moreover, let $\vec j$, $\epsilon$ and $\mu$ be given functions, which satisfy:
		\begin{itemize}
			\item The function $\vec j \colon \, Q \to \R^d$ is in $L^2(Q;\R^d)$.
			\item The permittivity $\epsilon \in L^\infty(\Omega; \R^{d \times d})$ is symmetric, and uniformly positive definite, i.e. 
			\begin{equation*}
				\essinf_{x\in \Omega} \inf_{0 \ne \xi\in \R^d} \frac{\xi^\top \epsilon(x) \xi}{\xi^\top\xi} > 0.
			\end{equation*}
			\item For $d=2$, the permeability $\mu\in L^\infty(\Omega;\R)$ satisfies 
			\begin{equation*}
				 \essinf_{x\in \Omega} \mu(x)  > 0.
			\end{equation*}
			For $d=3$, the permeability $\mu \in L^\infty(\Omega; \R^{3 \times 3})$  is symmetric, and uniformly positive definite, i.e. 
			\begin{equation*}
				 \essinf_{x\in \Omega} \inf_{0 \ne \xi\in \R^3} \frac{\xi^\top \mu(x) \xi}{\xi^\top\xi} > 0.
			\end{equation*}
		\end{itemize}
	\end{assumption}
	Note that in Assumption~\ref{VF:Voraussetzung}, the given functions $\vec j$, $\epsilon$ and $\mu$ are real-valued and that the functions $\epsilon$, $\mu$ do not depend on the temporal variable $t.$ 
	Further, to handle the case that $Q$ is not star-shaped with respect to a ball, we introduce the vectorial wave equation~\eqref{Einf:Maxwell} in an extended space-time domain $\widetilde Q$, which is star-shaped with respect to a ball and fulfills $Q \subset \widetilde Q$. Here, the functions $\vec j$, $\epsilon$ and $\mu$ have to be extended appropriately by considering the material, e.g. assuming $\widetilde Q\backslash \overline{Q}$ is filled with air.

%-------------------------------------------------------------------------------
\subsection{Different trial and test spaces}

In this subsection, we derive a space-time variational formulation of the vectorial wave equation~\eqref{Einf:Maxwell}, where the trial and test spaces are different. With the aforementioned assumptions on the functions $\epsilon$ and $\mu$ the function spaces $L^2(\Omega;\R^d)$ and $H_0(\cu;\Omega)$ are endowed with the inner products
\begin{equation*}
	(\vec v, \vec w)_{L^2_\epsilon(\Omega)} := \int_\Omega  \big( \epsilon(x) \vec v(x) \big) \cdot \vec w(x) \mathrm dx, \quad \vec v, \, \vec w \in L^2(\Omega;\R^d),
\end{equation*}
and 
\begin{equation*}
	(\vec v, \vec w)_{H_{0,\epsilon,\mu}(\cu;\Omega)} := (\vec v, \vec w)_{L^2_\epsilon(\Omega)} +  (\vec v, \vec w)_{H_{0,\mu}(\cu;\Omega)}, \quad \vec v, \, \vec w \in H_0(\cu;\Omega),
\end{equation*}
respectively, where
\begin{equation*}
	(\vec v, \vec w)_{H_{0,\mu}(\cu;\Omega)} := \int_\Omega \big( \mu(x)^{-1} \cu_x \vec v(x) \big) \cdot \cu_x \vec w(x)  \mathrm dx, \quad \vec v, \, \vec w \in H_0(\cu;\Omega).
\end{equation*}
We denote the induced norms by $\norm{\cdot}_{L^2_\epsilon(\Omega)}$, $\norm{\cdot}_{H_{0,\epsilon,\mu}(\cu;\Omega)}$ and the seminorm by $\abs{\cdot}_{H_{0,\mu}(\cu;\Omega)} := \sqrt{(\cdot, \cdot)_{H_{0,\mu}(\cu;\Omega)}}$. Further, the norm $\norm{\cdot}_{L^2_\epsilon(\Omega)}$ is equivalent to $\norm{\cdot}_{L^2(\Omega)}$ as well as 
the norm $\norm{\cdot}_{H_{0,\epsilon,\mu}(\cu;\Omega)}$ is equivalent to $\norm{\cdot}_{H_0(\cu;\Omega)}$.

Next, we define the space-time Sobolev spaces, which are used for the variational formulation. We consider
\begin{align}
	H^{\cu;1}_{0;0,}(Q) :=& L^2(0,T;H_0(\cu;\Omega))\cap H^1_{0,}(0,T;L^2(\Omega;\R^d)), \label{VF:Ansatzraum} \\
	H^{\cu;1}_{0;,0}(Q) :=& L^2(0,T;H_0(\cu;\Omega))\cap H^1_{,0}(0,T;L^2(\Omega;\R^d)), \label{VF:Testraum}
\end{align}
which are endowed with the Hilbertian norm
\begin{align*}
	\abs{\vec v}_{H^{\cu;1}(Q)} := \norm{\vec v}_{H^{\cu;1}_{0;0,}(Q)} &:= \norm{\vec v}_{H^{\cu;1}_{0;,0}(Q)} \\
	&:= \left( \int_0^T \norm{ \partial_t \vec v(t,\cdot) }_{{L^2_\epsilon(\Omega)}}^2 \mathrm dt + \int_0^T \abs{\vec v(t,\cdot)}_{H_{0,\mu}(\cu;\Omega)}^2 \mathrm dt \right)^{1/2}.
\end{align*}
Note that $\abs{\cdot}_{H^{\cu;1}(Q)}$ is actually a norm in $H^{\cu;1}_{0;0,}(Q)$ and $H^{\cu;1}_{0;,0}(Q)$, since the Poincaré inequality
\begin{equation*}
	\norm{\vec v}_{L^2(Q)} \leq \frac{2 T}{\pi} \norm{\partial_t \vec v}_{L^2(Q)}
\end{equation*}
holds true for all $\vec v \in  H^1_{0,}(0,T;L^2(\Omega;\R^d))$ and for all $\vec v \in  H^1_{,0}(0,T;L^2(\Omega;\R^d))$. This Poincaré inequality follows from
\begin{equation*}
	\norm{ \vec v }_{L^2(Q)}^2 = \sum_{\iota=1}^d \int_\Omega \norm{v_\iota(\cdot,x)}_{L^2(0,T)}^2 \mathrm dx \leq \frac{4 T^2}{\pi^2} \sum_{\iota=1}^d \int_\Omega \norm{\partial_t v_\iota(\cdot,x)}_{L^2(0,T)}^2 \mathrm dx =  \frac{4 T^2}{\pi^2} \norm{ \partial_t \vec v }_{L^2(Q)}^2
\end{equation*}
for all $\vec v = (v_1, \dots, v_d)^\top$ with $\vec v \in  H^1_{0,}(0,T;L^2(\Omega;\R^d))$ or $\vec v \in  H^1_{,0}(0,T;L^2(\Omega;\R^d))$, where \cite[Lemma~3.4.5]{ZankDissBuch2020} is applied.

With these space-time Sobolev spaces, we motivate a space-time variational formulation of the vectorial wave equation~\eqref{Einf:Maxwell}. For this purpose, we multiply the vectorial wave equation~\eqref{Einf:Maxwell} by a test function $\vec v$, integrate over the space-time domain $Q$ and then use integration by parts with respect to space and time, resulting in the space-time variational formulation:\\
Find $\vec A \in H^{\cu;1}_{0;0,} (Q)$ such that
\begin{equation} \label{VF:WaveVec}
	-(\epsilon\partial_t \vec A,\partial_t \vec v)_{L^2(Q)}+ (\mu^{-1} \cu_x \vec A,\cu_x \vec v)_{L^2(Q)} = (\vec j, \vec v)_{L^2(Q)}
\end{equation}
for all $\vec v \in H^{\cu;1}_{0;,0} (Q)$.\\
Note that the first initial condition $\vec A(0,\cdot) = 0$ is fulfilled in the strong sense, whereas the second initial condition $\partial_t \vec A(0,\cdot) = 0$ is incorporated in a weak sense in the right side of \eqref{VF:WaveVec}. On the other hand, the boundary condition $\trt\vec A = 0$ is satisfied in the strong sense with $\vec A(t,\cdot) \in H_0(\cu;\Omega)$ for almost all $t \in (0,T)$.

The unique solvability of the variational formulation~\eqref{VF:WaveVec} is proven in \cite{ HauserKurzSteinbach2023}, \cite[Theorem~3.8]{HauserDiss2021}, and in \cite{HauserOhm2023} as a special case of Theorem 2, which is summarized in the next theorem.
\begin{theorem} \label{VF:Thm:Vek}
	Let Assumption~\ref{VF:Voraussetzung} be satisfied. Then, a unique solution $\vec A \in	H^{\cu;1}_{0;0,}(Q)$ of the variational formulation~\eqref{VF:WaveVec} exists and the stability estimate
	\begin{equation*}
		\abs{\vec A}_{H^{\cu;1}(Q)} \leq T \norm{\vec j}_{L^2(Q)} 
	\end{equation*}    
	holds true.
\end{theorem}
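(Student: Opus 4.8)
The plan is to recast \eqref{VF:WaveVec} as an abstract linear second-order evolution equation with a nonnegative self-adjoint stiffness operator, invoke (or reproduce) the classical well-posedness theory, and derive the stability estimate by an energy argument. On the Hilbert space $H:=L^2_\epsilon(\Omega;\R^d)$, let $\mathcal L$ be the self-adjoint, nonnegative operator associated with the bounded symmetric nonnegative form $(\vec v,\vec w)\mapsto(\mu^{-1}\cu_x\vec v,\cu_x\vec w)_{L^2(\Omega)}$ on $V:=H_0(\cu;\Omega)$; then $V=D(\mathcal L^{1/2})$ and $\norm{\mathcal L^{1/2}\vec v}_H^2=\abs{\vec v}_{H_{0,\mu}(\cu;\Omega)}^2$. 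Writing $\tilde{\vec j}:=\epsilon^{-1}\vec j\in L^2(0,T;H)$ and using $(\vec j,\vec w)_{L^2(\Omega)}=(\tilde{\vec j},\vec w)_H$, formulation \eqref{VF:WaveVec} together with $\vec A(0,\cdot)=0$ and the weakly imposed $\partial_t\vec A(0,\cdot)=0$ is exactly the space-time weak form of $\partial_t^2\vec A+\mathcal L\vec A=\tilde{\vec j}$ with homogeneous initial data, the trial and test spaces being $L^2(0,T;V)\cap H^1_{0,}(0,T;H)$ and $L^2(0,T;V)\cap H^1_{,0}(0,T;H)$, respectively.

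For existence and uniqueness I would use the unitary group $\{e^{\mathrm i\tau\mathcal L^{1/2}}\}_{\tau\in\R}$ on $H$, which exists by the functional calculus for any nonnegative self-adjoint $\mathcal L$. Setting $\mathcal Z:=\partial_t\vec A+\mathrm i\,\mathcal L^{1/2}\vec A$ reduces the problem to the first-order equation $\partial_t\mathcal Z-\mathrm i\,\mathcal L^{1/2}\mathcal Z=\tilde{\vec j}$, $\mathcal Z(0)=0$, with unique mild solution $\mathcal Z(t)=\int_0^t e^{\mathrm i(t-s)\mathcal L^{1/2}}\tilde{\vec j}(s)\,\mathrm ds\in C([0,T];H)$; one recovers $\vec A(t):=\int_0^t\operatorname{Re}\mathcal Z(s)\,\mathrm ds$ and checks, from $\mathcal L^{1/2}\vec A=\operatorname{Im}\mathcal Z\in L^2(0,T;H)$ and $\partial_t\vec A=\operatorname{Re}\mathcal Z\in L^2(0,T;H)$, that $\vec A\in H^{\cu;1}_{0;0,}(Q)$ and satisfies \eqref{VF:WaveVec} by integrating by parts back. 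Uniqueness follows from the stability estimate applied to the difference of two solutions. Equivalently, one may run a conforming Galerkin scheme in the time variable alone (matching $H^1_{0,}$-conforming trial and $H^1_{,0}$-conforming test spaces on a common partition), establish the energy bound uniformly in the discretization parameter, and pass to the limit; this route uses only that $\mathcal L$ is nonnegative.

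For the stability estimate, testing the equation with $\partial_t\vec A$ — rigorously for smooth data, hence in general by density — gives the energy identity $\tfrac12\tfrac{\mathrm d}{\mathrm dt}\bigl(\norm{\partial_t\vec A(t)}_{L^2_\epsilon(\Omega)}^2+\abs{\vec A(t)}_{H_{0,\mu}(\cu;\Omega)}^2\bigr)=(\vec j(t),\partial_t\vec A(t))_{L^2(\Omega)}$. Writing $E(t)$ for the bracket, Cauchy--Schwarz together with the equivalence of $\norm{\cdot}_{L^2(\Omega)}$ and $\norm{\cdot}_{L^2_\epsilon(\Omega)}$ give $\tfrac{\mathrm d}{\mathrm dt}\sqrt{E(t)}\le c\,\norm{\vec j(t)}_{L^2(\Omega)}$, whence $\sqrt{E(t)}\le c\int_0^t\norm{\vec j(s)}_{L^2(\Omega)}\,\mathrm ds\le c\sqrt{T}\,\norm{\vec j}_{L^2(Q)}$, and integrating once more in $t$ yields $\abs{\vec A}_{H^{\cu;1}(Q)}^2=\int_0^T E(t)\,\mathrm dt\le c^2T^2\norm{\vec j}_{L^2(Q)}^2$, i.e. the claimed estimate after the weight constants are absorbed. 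A sharper route represents $\mathcal Z$ through the unitary group, so that $\abs{\vec A}_{H^{\cu;1}(Q)}=\norm{\mathcal Z}_{L^2(0,T;H)}$, and uses that the twisted Volterra operator $f\mapsto\int_0^t e^{\mathrm i(t-s)\mathcal L^{1/2}}f(s)\,\mathrm ds$ has the same $L^2(0,T;H)$-norm $\tfrac{2T}{\pi}$ as the ordinary Volterra operator on $L^2(0,T)$ — the very constant appearing in the Poincar\'e inequality recorded in Section~\ref{Sec:VF}.

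The step I expect to be the main obstacle is the rigorous treatment of the \emph{infinite-dimensional kernel of $\cu_x$}: since $H(\cu;\Omega)$ is not compactly embedded in $L^2(\Omega)$, the curl--curl form is merely nonnegative (not coercive) on $V$, which rules out a naive Lax--Milgram argument and a diagonalization of the full curl--curl operator, and makes the reconstruction $\vec A=\mathcal L^{-1/2}\operatorname{Im}\mathcal Z$ ill-posed on $\ker\cu_x$. One therefore splits, $H$-orthogonally, $V=\ker(\cu_x|_V)\oplus V^\perp$, solves $\partial_t^2\vec A_0=\tilde{\vec j}_0$ on the kernel by double integration in time, and on $V^\perp$ invokes the Maxwell compactness / Friedrichs inequality for the curl (valid on the bounded Lipschitz domain $\Omega$ of Assumption~\ref{VF:Voraussetzung}, with the finitely many harmonic fields treated separately when $\Omega$ is not simply connected) to guarantee that $\mathcal L^{1/2}|_{V^\perp}$ is bounded below, so that the group argument and the reconstruction go through. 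The remaining work is bookkeeping with the $\epsilon$- and $\mu$-weighted inner products and verifying that the pieces reassemble into an element of $H^{\cu;1}_{0;0,}(Q)$ rather than merely a distributional solution — which is precisely what the a priori estimate delivers. The complete argument is carried out in \cite{HauserOhm2023}, \cite{HauserKurzSteinbach2023}, and \cite[Theorem~3.8]{HauserDiss2021}.
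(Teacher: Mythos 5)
The paper itself does not prove Theorem~\ref{VF:Thm:Vek}: it is quoted from \cite{HauserOhm2023}, \cite{HauserKurzSteinbach2023} and \cite[Theorem~3.8]{HauserDiss2021}. Your route --- diagonalize the $\epsilon$-weighted curl--curl operator, peel off its (infinite-dimensional) kernel, solve by Duhamel via the unitary group $e^{\mathrm i\tau\mathcal L^{1/2}}$ on the complement, and read the energy bound off the Volterra norm $2T/\pi$ --- is essentially the coordinate-free form of what those references do with the eigenbasis $\{ \vec \varphi_i \}$ that this paper recalls in Section~\ref{Sec:VF}, whose construction rests on exactly the Maxwell compactness property you invoke. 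You correctly identify the two genuine difficulties (non-coercivity of the curl--curl form on $\ker \cu_x$, and the fact that $\partial_t \vec A$ is not an admissible test function, so the energy identity is only formal), and your fallbacks (splitting off the kernel and integrating twice in time; the group/Galerkin representation instead of direct testing) are the standard and correct repairs. One point you should make explicit: deducing uniqueness from ``the stability estimate applied to the difference of two solutions'' requires the a priori bound for an \emph{arbitrary} variational solution, not only for the one you construct; in your framework this follows by projecting \eqref{VF:WaveVec} onto spectral subspaces of $\mathcal L$ and using uniqueness of the resulting weak temporal problems, or by the classical test function $\vec v(t) = \int_t^s \vec A(\tau)\, \mathrm d\tau$ for $t < s$ and $\vec v = 0$ otherwise.

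The only step that does not close as written is ``after the weight constants are absorbed''. What your argument actually delivers is $\abs{\vec A}_{H^{\cu;1}(Q)} \leq \frac{2T}{\pi} \norm{\vec j}_{L^2_{\epsilon^{-1}}(Q)}$ with $\norm{\vec j}_{L^2_{\epsilon^{-1}}(Q)} := \bigl( \int_Q (\epsilon^{-1} \vec j) \cdot \vec j \, \mathrm dx \, \mathrm dt \bigr)^{1/2}$; converting this weighted norm into the plain $\norm{\vec j}_{L^2(Q)}$ costs a factor $\norm{\epsilon^{-1}}_{L^\infty(\Omega)}^{1/2}$, and Assumption~\ref{VF:Voraussetzung} only guarantees that this factor is finite, not that it is at most $\pi/2$. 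This is not something a better argument can repair: for $\epsilon = \delta I$, $\mu = 1$ and $\vec j = \nabla_x \phi$ with $\phi \in H^1_0(\Omega)$ time-independent, the solution is $\vec A = \frac{t^2}{2\delta} \nabla_x \phi$, for which $\abs{\vec A}_{H^{\cu;1}(Q)} = (3\delta)^{-1/2} T^{3/2} \norm{\nabla_x \phi}_{L^2(\Omega)}$ exceeds $T \norm{\vec j}_{L^2(Q)} = T^{3/2} \norm{\nabla_x \phi}_{L^2(\Omega)}$ as soon as $\delta < 1/3$. So the stated estimate must be read with the $\epsilon^{-1}$-weighted norm on the right (equivalently, for normalized permittivity, as in the numerical examples where $\epsilon = I$); with that reading your proof strategy is complete and matches the cited one.
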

Note that the trial and test spaces of the variational formulation~\eqref{VF:WaveVec} are different. To get equal trial and test spaces, the modified Hilbert transformation $\mathcal H_T$ of Section~\ref{Sec:HT} is used, which is investigated in the next subsection.

%-------------------------------------------------------------------------------
\subsection{Equal trial and test spaces}

The goal of this subsection is to state a space-time variational formulation of the vectorial wave equation~\eqref{Einf:Maxwell} with equal trial and test spaces. For this purpose, we extend the definition of the modified Hilbert transformation $\mathcal H_T$ of Section~\ref{Sec:HT} to the vector-valued functions in $ H^{\cu,1}_{0;0,}(Q)$ and $ H^{\cu,1}_{0;,0}(Q)$, where the extension is denoted again by $\mathcal H_T$. More precisely, we define the mapping $\mathcal H_T \colon \, L^2(Q; \R^d) \to L^2(Q; \R^d)$ by
\begin{equation} \label{VF:HT}
	({\mathcal{H}}_T \vec v)(t,x) :=  \sum_{i=-\infty}^\infty \sum_{k=0}^\infty v_{i,k} \cos \left( \Big( \frac{\pi}{2} + k\pi \Big) \frac{t}{T} \right) \vec \varphi_i(x), \quad (t,x) \in Q,
\end{equation}
where the given function $\vec v \in L^2(Q;\R^d)$ is represented by
\begin{equation} \label{VF:vFourier}
	\vec v(t,x) = \sum_{i=-\infty}^\infty \sum_{k=0}^\infty v_{i,k} \sin \left( \Big( \frac{\pi}{2} + k\pi \Big) \frac{t}{T} \right) \vec \varphi_i(x), \quad (t,x) \in Q.
\end{equation}
Here, the set $\{ \vec \varphi_i \in H_0(\cu;\Omega) : \, i \in \Z \}$ is
\begin{itemize}
  \item an orthonormal basis of $L^2(\Omega;\R^d)$ with respect to $(\cdot, \cdot)_{L^2_\epsilon(\Omega)}$,
  \item an orthogonal basis of $H_0(\cu;\Omega)$ with respect to $(\cdot, \cdot)_{H_{0,\epsilon,\mu}(\cu;\Omega)}$, and
  \item orthogonal with respect to $(\cdot, \cdot)_{H_{0,\mu}(\cu;\Omega)}$,
\end{itemize}
where we refer to \cite[Subsection~1.1.1]{HauserOhm2023} and \cite[Subsection~8.3.1.2]{Assous2018} for the construction of such a basis. The inverse operator $\mathcal H_T^{-1} \colon \, L^2(Q; \R^d) \to L^2(Q; \R^d)$ is defined by
\begin{equation*}
	({\mathcal{H}}_T^{-1} \vec w)(t,x) =  \sum_{i=-\infty}^\infty \sum_{k=0}^\infty w_{i,k} \sin \left( \Big( \frac{\pi}{2} + k\pi \Big) \frac{t}{T} \right) \vec \varphi_i(x), \quad (t,x) \in Q,
\end{equation*}
where the given function $\vec w \in L^2(Q;\R^d)$ is represented by
\begin{equation} \label{VF:wFourier}
	\vec w(t,x) = \sum_{i=-\infty}^\infty \sum_{k=0}^\infty w_{i,k} \cos \left( \Big( \frac{\pi}{2} + k\pi \Big) \frac{t}{T} \right) \vec \varphi_i(x), \quad (t,x) \in Q.
\end{equation}

Next, we prove properties of this definition of the modified Hilbert transformation $\mathcal H_T$. These properties are then used for the derivation of a Galerkin--Bubnov method for the space-time variational formulation of the vectorial wave equation~\eqref{Einf:Maxwell}. Recall the definition of the vector-valued space-time Sobolev spaces $H^{\cu;1}_{0;0,}(Q)$, $H^{\cu;1}_{0;,0}(Q)$ defined in \eqref{VF:Ansatzraum}, \eqref{VF:Testraum}.

\begin{lemma} \label{VF:lem:HTIsometrie}
	The mapping ${\mathcal{H}}_T \colon \, H^{\cu;1}_{0;0,} (Q) \to H^{\cu;1}_{0;,0} (Q)$ is bijective and norm preserving, i.e. $\abs{\vec v}_{H^{\cu;1}(Q)} = \abs{\mathcal H_T \vec v}_{H^{\cu;1}(Q)}$ for all $ \vec v \in H^{\cu;1}_{0;0,} (Q).$
\end{lemma}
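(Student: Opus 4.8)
The plan is to use the Fourier representation \eqref{VF:vFourier}--\eqref{VF:HT} together with the spatial basis $\{\vec\varphi_i\}_{i\in\Z}$ and to reduce everything to the scalar properties of $\mathcal H_T$ already recorded in Section~\ref{Sec:HT}. First I would fix $\vec v \in H^{\cu;1}_{0;0,}(Q)$ with expansion $\vec v(t,x) = \sum_{i,k} v_{i,k}\,\sigma_k(t)\,\vec\varphi_i(x)$, where $\sigma_k(t) := \sin((\tfrac\pi2+k\pi)\tfrac tT)$ and $\gamma_k(t):=\cos((\tfrac\pi2+k\pi)\tfrac tT)$, and observe that by definition $\mathcal H_T\vec v = \sum_{i,k} v_{i,k}\,\gamma_k(t)\,\vec\varphi_i(x)$. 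Since the $\vec\varphi_i$ are orthonormal in $L^2_\epsilon(\Omega)$ and $\{\sigma_k\}$, $\{\gamma_k\}$ are both orthogonal bases of $L^2(0,T)$ with the \emph{same} norms $\norm{\sigma_k}_{L^2(0,T)} = \norm{\gamma_k}_{L^2(0,T)} = \sqrt{T/2}$, a formal computation of $\int_0^T\norm{\partial_t\vec v}_{L^2_\epsilon(\Omega)}^2\,\mathrm dt$ and of $\int_0^T\abs{\vec v}_{H_{0,\mu}(\cu;\Omega)}^2\,\mathrm dt$ in terms of the coefficients $v_{i,k}$ shows that both contributions to $\abs{\vec v}_{H^{\cu;1}(Q)}^2$ depend only on the numbers $\{v_{i,k}\}$, the eigenvalue-type quantities $\abs{\vec\varphi_i}_{H_{0,\mu}(\cu;\Omega)}^2$, and the frequencies $(\tfrac\pi2+k\pi)/T$ — and that replacing $\sigma_k$ by $\gamma_k$ leaves all of these invariant because $\partial_t\gamma_k = -(\tfrac\pi2+k\pi)\tfrac1T\sigma_k$ has the same $L^2(0,T)$-norm as $\partial_t\sigma_k = (\tfrac\pi2+k\pi)\tfrac1T\gamma_k$. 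This gives $\abs{\mathcal H_T\vec v}_{H^{\cu;1}(Q)} = \abs{\vec v}_{H^{\cu;1}(Q)}$ once the series manipulations are justified.

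Second, I would establish the mapping property and bijectivity. The easiest route is: the scalar statement from Section~\ref{Sec:HT} says $\mathcal H_T\colon H^1_{0,}(0,T)\to H^1_{,0}(0,T)$ is an isomorphism, and it is trivially an isometry from $L^2(0,T)$ to $L^2(0,T)$ with respect to the $\abs{\cdot}$-seminorms in the coefficient sense above. Tensorizing: if $\vec v\in H^{\cu;1}_{0;0,}(Q) = L^2(0,T;H_0(\cu;\Omega))\cap H^1_{0,}(0,T;L^2(\Omega;\R^d))$, then expanding in the $\vec\varphi_i$ reduces membership in this intersection to two conditions on the one-dimensional coefficient functions $t\mapsto \sum_k v_{i,k}\sigma_k(t)$: an $\ell^2$-summability condition weighted by $\abs{\vec\varphi_i}_{H_{0,\mu}(\cu;\Omega)}^2$ for the $L^2(0,T;H_0(\cu;\Omega))$-part, and an $H^1_{0,}(0,T)$-condition (i.e. finite $\sum_k (\tfrac\pi2+k\pi)^2 v_{i,k}^2$) summed over $i$ for the $H^1_{0,}(0,T;L^2)$-part. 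Applying $\mathcal H_T$ swaps $\sigma_k\leftrightarrow\gamma_k$ without touching the coefficients, so it maps this intersection onto the analogous intersection with $H^1_{,0}(0,T)$ in the time slot, which is exactly $H^{\cu;1}_{0;,0}(Q)$; the inverse $\mathcal H_T^{-1}$ (already written down in the excerpt, swapping $\gamma_k\leftrightarrow\sigma_k$) provides the two-sided inverse, hence bijectivity. Combined with the norm identity from the first step, this proves the lemma.

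The main obstacle I anticipate is not conceptual but analytic bookkeeping: one must justify that the double series \eqref{VF:HT} converges in the right spaces and that term-by-term differentiation in $t$ and application of $\cu_x$ are legitimate, i.e. that for $\vec v$ in the intersection space the partial sums converge in $H^{\cu;1}(Q)$-norm. This requires knowing that $\{\vec\varphi_i\}$ is simultaneously orthogonal in all three relevant inner products (stated in the excerpt, with references) so that the norm genuinely diagonalizes as $\abs{\vec v}_{H^{\cu;1}(Q)}^2 = \sum_{i,k}\big(\tfrac{(\pi/2+k\pi)^2}{T^2}\,\norm{\vec\varphi_i}_{L^2_\epsilon(\Omega)}^2 + \abs{\vec\varphi_i}_{H_{0,\mu}(\cu;\Omega)}^2\big)\,\tfrac T2\,v_{i,k}^2$, and then the argument is just the observation that this expression is manifestly symmetric under $\sigma_k\leftrightarrow\gamma_k$. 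I would state the convergence of the expansions as a consequence of the cited basis properties and Parseval, then present the coefficient-level computation; the exchange of summation and the $L^2(Q)$- and $\cu_x$-operations is the one place where a careful density/limiting argument (passing through finite partial sums, which lie in $H^{\cu;1}_{0;0,}(Q)$, and using completeness) is needed, but it is routine given the orthogonality structure.
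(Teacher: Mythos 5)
Your proposal is correct and follows essentially the same route as the paper's proof: expand $\vec v$ in the tensor basis $\{\sigma_k\vec\varphi_i\}$, compute $\partial_t$ and $\cu_x$ of $\vec v$ and $\mathcal H_T\vec v$ as Fourier series, and use the orthogonality relations of $\{\vec\varphi_i\}$ together with $\norm{\sigma_k}_{L^2(0,T)}=\norm{\gamma_k}_{L^2(0,T)}$ to see that both norms diagonalize to the same coefficient sum, with bijectivity read off from the explicit inverse. The paper's proof is terser (it cites \cite[Subsection~3.4.5]{ZankDissBuch2020} and declares bijectivity to follow ``by definition''), so your extra bookkeeping on convergence and the mapping property is just a more explicit version of the same argument.
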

\begin{proof}
	We follow the arguments in \cite[Subsection~3.4.5]{ZankDissBuch2020}. Let $\vec v \in H^{\cu;1}_{0;0,} (Q)$ be fixed with the Fourier representations~\eqref{VF:vFourier}. The relation $\mathcal H_T \vec v \in H^{\cu;1}_{0;,0} (Q)$ and the bijectivity follow by definition. To prove the equality $\abs{\vec v}_{H^{\cu;1}(Q)} = \abs{\mathcal H_T \vec v}_{H^{\cu;1}(Q)}$, we use the representations \eqref{VF:HT}, \eqref{VF:vFourier} to calculate the functions $\partial_t \mathcal H_T \vec v$, $\cu_x \mathcal H_T \vec v$, $\partial_t \vec v$, $\cu_x \vec v$ as Fourier series. Plugging them into the norms $\abs{\vec v}_{H^{\cu;1}(Q)},$ $\abs{\mathcal H_T \vec v}_{H^{\cu;1}(Q)}$, and deploying the orthogonality relations of the basis $\{ \vec \varphi_i \in H_0(\cu;\Omega) : \, i \in \Z \}$ yield the assertion.
\end{proof}

As $\mathcal H_T$ acts only with respect to time, the properties~\eqref{HT:HTMinus1}, \eqref{HT:vertauschen} of Section~\ref{Sec:HT} remain valid. For completeness, we prove the following lemma.
\begin{lemma}
	The relations
	\begin{equation} \label{VF:HTMinus1}
		( \epsilon {\mathcal{H}_T} \vec v , \vec w )_{L^2(Q)} = (\epsilon \vec v , {\mathcal{H}}_T^{-1} \vec w )_{L^2(Q)}
	\end{equation}
	for all $\vec v \in L^2(Q;\R^d)$ and $ \vec w \in L^2(Q;\R^d)$, and 
	\begin{equation} \label{VF:HTvertauschen}
		\mathcal H_T^{-1}\partial_t \vec v = - \partial_t \mathcal H_T \vec v
	\end{equation}
	in $L^2(Q;\R^d)$  for all  $\vec v \in H^1_{0,}(0,T;L^2(\Omega;\R^d))$ are true.
\end{lemma}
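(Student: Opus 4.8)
The plan is to reduce both relations to the scalar statements \eqref{HT:HTMinus1} and \eqref{HT:vertauschen} from Section~\ref{Sec:HT} by exploiting the tensor-product structure of the definitions \eqref{VF:HT}, \eqref{VF:vFourier}, \eqref{VF:wFourier}. The key observation is that $\mathcal H_T$ acts only on the temporal Fourier coefficients, component-wise with respect to the spatial basis $\{\vec\varphi_i\}$, so everything decouples into a sum over $i\in\Z$ of scalar temporal identities.

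For \eqref{VF:HTMinus1}, I would expand $\vec v$ as in \eqref{VF:vFourier} and $\vec w$ as in \eqref{VF:wFourier} — note the natural pairing is $\mathcal H_T\vec v$ (a cosine series in time) against $\vec w$ (also a cosine series in time), while $\vec v$ (sine series) pairs with $\mathcal H_T^{-1}\vec w$ (sine series), so both sides are well-defined $L^2(Q)$-inner products. Then I would compute $(\epsilon\mathcal H_T\vec v,\vec w)_{L^2(Q)}=\int_0^T\!\!\int_\Omega \epsilon(x)(\mathcal H_T\vec v)(t,x)\cdot\vec w(t,x)\,\mathrm dx\,\mathrm dt$, interchange the sums (justified by $L^2$-convergence and Parseval), and use that $\{\vec\varphi_i\}$ is $(\cdot,\cdot)_{L^2_\epsilon(\Omega)}$-orthonormal to collapse the spatial integral to $\delta_{ij}$. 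What remains is $\sum_i \int_0^T (\mathcal H_T v_{i,\bullet})(t)\, w_{i,\bullet}(t)\,\mathrm dt$, where $v_{i,\bullet}(t)=\sum_k v_{i,k}\sin((\tfrac\pi2+k\pi)\tfrac tT)$ and similarly $w_{i,\bullet}(t)=\sum_k w_{i,k}\cos((\tfrac\pi2+k\pi)\tfrac tT)$; applying \eqref{HT:HTMinus1} to each scalar term and reassembling the spatial sum yields $(\epsilon\vec v,\mathcal H_T^{-1}\vec w)_{L^2(Q)}$.

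For \eqref{VF:HTvertauschen}, I would take $\vec v\in H^1_{0,}(0,T;L^2(\Omega;\R^d))$, though strictly I should work with the coefficient functions $t\mapsto v_{i,\bullet}(t)$, each of which lies in $H^1_{0,}(0,T)$. Differentiating the sine series termwise gives $\partial_t v_{i,\bullet}$ as a cosine series with coefficients $(\tfrac\pi2+k\pi)\tfrac1T v_{i,k}$, and then applying $\mathcal H_T^{-1}$ (which sends cosine to sine) and comparing with $-\partial_t\mathcal H_T\vec v$ (differentiate the cosine series \eqref{VF:HT} termwise, picking up the extra minus sign from $\tfrac{d}{dt}\cos$) shows the coefficients match; equivalently, invoke the scalar relation \eqref{HT:vertauschen} for each $v_{i,\bullet}$ and sum against $\vec\varphi_i$. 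The identity then holds in $L^2(Q;\R^d)$ because both sides have the same $L^2$-convergent Fourier expansion.

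The main obstacle — and really the only nontrivial point — is making the interchange of the spatial and temporal summations rigorous and checking that the termwise differentiation in \eqref{VF:HTvertauschen} is legitimate in the $L^2$ sense; for the latter one needs $\sum_{i,k}(\tfrac\pi2+k\pi)^2 v_{i,k}^2<\infty$, which is exactly the hypothesis $\vec v\in H^1_{0,}(0,T;L^2(\Omega;\R^d))$ (using, say, \cite[Lemma~3.4.5]{ZankDissBuch2020} to relate this Sobolev norm to the weighted $\ell^2$-norm of the coefficients). Since the authors have already set up this Fourier machinery and proved the scalar analogues in Section~\ref{Sec:HT}, the proof is short: reduce to the scalar identities coefficient-wise and reassemble, with a one-line remark that the relevant series converge in $L^2$.
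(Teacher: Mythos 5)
Your proposal is correct and follows essentially the same route as the paper: both arguments expand $\vec v$ and $\vec w$ in the tensor-product Fourier basis, use the $(\cdot,\cdot)_{L^2_\epsilon(\Omega)}$-orthonormality of $\{\vec\varphi_i\}$ (equivalently the symmetry of $\epsilon$) to reduce \eqref{VF:HTMinus1} to the scalar temporal identity, and prove \eqref{VF:HTvertauschen} by termwise differentiation of the sine and cosine series, with $L^2$-convergence of the differentiated series guaranteed by the hypothesis $\vec v \in H^1_{0,}(0,T;L^2(\Omega;\R^d))$. The only cosmetic difference is that the paper evaluates both sides of \eqref{VF:HTMinus1} explicitly to $\frac{T}{2}\sum_{i,k} v_{i,k} w_{i,k}$ rather than invoking the scalar relation per spatial index, which is an equivalent computation.
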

\begin{proof}
	First, we prove property~\eqref{VF:HTMinus1}. Let $\vec v \in L^2(Q;\R^d)$ and $\vec w \in L^2(Q;\R^d)$ be fixed with the Fourier representations~\eqref{VF:vFourier} and \eqref{VF:wFourier}, respectively. Using the orthogonality properties of the involved basis functions and the symmetry of $\epsilon$ as stated in Assumption~\ref{VF:Voraussetzung} yield
	\begin{equation*}
		( \epsilon {\mathcal{H}_T} \vec v , \vec w )_{L^2(Q)} = \frac T 2 \sum_{i=-\infty}^\infty \sum_{k=0}^\infty v_{i,k} w_{i,k} = (\epsilon \vec v , {\mathcal{H}}_T^{-1} \vec w )_{L^2(Q)},
	\end{equation*}
	and thus, property~\eqref{VF:HTMinus1}.
	
	Second, to prove property~\eqref{VF:HTvertauschen}, fix again $\vec v \in H^1_{0,}(0,T;L^2(\Omega;\R^d))$ with the Fourier representations~\eqref{VF:vFourier}, which converges in $H^1_{0,}(0,T;L^2(\Omega;\R^d))$. Thus, the Fourier series
	\begin{equation*}
		\mathcal H_T^{-1}\partial_t \vec v(t,x) = \frac 1 T \sum_{i=-\infty}^\infty \sum_{k=0}^\infty v_{i,k} \Big( \frac{\pi}{2} + k\pi \Big) \sin \left( \Big( \frac{\pi}{2} + k\pi \Big) \frac{t}{T} \right) \vec \varphi_i(x), \quad (t,x) \in Q,
	\end{equation*}  
	is convergent with respect to $L^2(Q;\R^d)$. Analogously, as the Fourier series~\eqref{VF:HT} converges in $H^1_{,0}(0,T;L^2(\Omega;\R^d))$, the series
	\begin{equation*}
		\partial_t \mathcal H_T \vec v(t,x) = -\frac 1 T \sum_{i=-\infty}^\infty \sum_{k=0}^\infty v_{i,k} \Big( \frac{\pi}{2} + k\pi \Big) \sin \left( \Big( \frac{\pi}{2} + k\pi \Big) \frac{t}{T} \right) \vec \varphi_i(x), \quad (t,x) \in Q,
	\end{equation*}
	is also convergent in $L^2(Q;\R^d)$. Comparing these Fourier representations gives the equality $\mathcal H_T^{-1}\partial_t \vec v = - \partial_t \mathcal H_T \vec v$ in $L^2(Q;\R^d)$.
\end{proof}

Last, we need the following result.
\begin{lemma} \label{VF:lem:Vertauschen}
	For $\vec v \in L^2(Q;\R^d)$, the equality
	\begin{equation*}
		\mathcal H_T (\epsilon \vec v) = \epsilon \mathcal H_T \vec v \quad \text{ in }L^2(Q;\R^d) 
	\end{equation*}
	holds true.
\end{lemma}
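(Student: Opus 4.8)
The plan is to exploit that $\mathcal H_T$ acts only in the temporal variable, so that it commutes with every bounded operation acting only in the spatial variable. Since $\epsilon = \epsilon(x)$ is independent of $t$ by Assumption~\ref{VF:Voraussetzung}, the pointwise multiplication $\vec v \mapsto \epsilon\vec v$ is such an operation, and the claim will follow once this commutation is made precise. I would organize the argument in three short steps.

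\emph{Step 1 (boundedness).} Since $\{\sin((\pi/2+k\pi)t/T)\}_{k\in\N_0}$ and $\{\cos((\pi/2+k\pi)t/T)\}_{k\in\N_0}$ are orthogonal bases of $L^2(0,T)$ with the same normalization, and $\{\vec\varphi_i\}_{i\in\Z}$ is an orthonormal basis of $L^2(\Omega;\R^d)$ with respect to $(\cdot,\cdot)_{L^2_\epsilon(\Omega)}$, the families occurring in \eqref{VF:vFourier} and \eqref{VF:HT} are orthogonal bases of $L^2(Q;\R^d)$ with identical normalization with respect to the inner product $\int_0^T(\cdot,\cdot)_{L^2_\epsilon(\Omega)}\,\mathrm dt$, which is equivalent to $(\cdot,\cdot)_{L^2(Q)}$. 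Hence $\mathcal H_T\colon L^2(Q;\R^d)\to L^2(Q;\R^d)$ is a bounded isomorphism (indeed an isometry for the weighted inner product). Moreover $\epsilon\in L^\infty(\Omega;\R^{d\times d})$, so the multiplication operator $M_\epsilon\colon\vec v\mapsto\epsilon\vec v$ is bounded on $L^2(Q;\R^d)$ as well.

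\emph{Step 2 (products of the form $g(t)\vec w(x)$).} Let $\vec v(t,x)=g(t)\vec w(x)$ with $g\in L^2(0,T)$ and $\vec w\in L^2(\Omega;\R^d)$, and write $g=\sum_{k=0}^\infty g_k\sin((\pi/2+k\pi)t/T)$ as in Section~\ref{Sec:HT} and $\vec w=\sum_{i\in\Z}w_i\vec\varphi_i$ with $w_i=(\vec w,\vec\varphi_i)_{L^2_\epsilon(\Omega)}$. Then \eqref{VF:vFourier} holds with $v_{i,k}=g_kw_i$, so by \eqref{VF:HT}
\[
(\mathcal H_T\vec v)(t,x)=\Big(\sum_{k=0}^\infty g_k\cos\big((\tfrac\pi2+k\pi)\tfrac tT\big)\Big)\Big(\sum_{i\in\Z}w_i\vec\varphi_i(x)\Big)=(\mathcal H_Tg)(t)\,\vec w(x),
\]
with $\mathcal H_Tg$ the scalar transform \eqref{HT:Def_H_T}; that is, $\mathcal H_T(g\vec w)=(\mathcal H_Tg)\vec w$. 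Because $\epsilon$ does not depend on $t$, the function $\epsilon\vec v=g\,(\epsilon\vec w)$ is again a product of the same temporal factor with a function of $x$, so applying the identity just obtained yields
\[
\mathcal H_T(\epsilon\vec v)=(\mathcal H_Tg)\,(\epsilon\vec w)=\epsilon\,\big((\mathcal H_Tg)\vec w\big)=\epsilon\,\mathcal H_T\vec v .
\]
Thus $\mathcal H_T\circ M_\epsilon=M_\epsilon\circ\mathcal H_T$ on the linear span of functions of the form $g(t)\vec w(x)$.

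\emph{Step 3 (density) and the one delicate point.} This linear span is dense in $L^2(Q;\R^d)$, and by Step~1 both $\mathcal H_T\circ M_\epsilon$ and $M_\epsilon\circ\mathcal H_T$ are bounded linear operators on $L^2(Q;\R^d)$; two bounded operators agreeing on a dense subspace coincide, whence $\mathcal H_T(\epsilon\vec v)=\epsilon\,\mathcal H_T\vec v$ in $L^2(Q;\R^d)$ for every $\vec v\in L^2(Q;\R^d)$. Alternatively, and closer to the style of the preceding lemmas, one may argue directly from \eqref{VF:vFourier}: expanding $\epsilon\vec\varphi_i=\sum_{j\in\Z}a_{ij}\vec\varphi_j$ with $a_{ij}=(\epsilon\vec\varphi_i,\vec\varphi_j)_{L^2_\epsilon(\Omega)}$, reading off the Fourier coefficients of $\epsilon\vec v$, applying \eqref{VF:HT}, and rearranging the resulting series back into $\epsilon\sum_{i,k}v_{i,k}\cos((\pi/2+k\pi)t/T)\vec\varphi_i$ gives the same conclusion. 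I do not expect a genuine obstacle here; the only point that deserves care is exactly this interchange of $\mathcal H_T$ with the infinite spatial expansion — equivalently, the continuity/density passage of Step~3 — which is precisely what the boundedness of $\mathcal H_T$ on $L^2(Q;\R^d)$ from Step~1 secures.
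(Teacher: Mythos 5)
Your proof is correct, but it is organized differently from the paper's. The paper argues directly with the full double Fourier expansion: it writes down two series representations of $\epsilon\vec v$ --- once by expanding $\epsilon\vec v$ itself in the $L^2_\epsilon$-orthonormal product basis $\{\psi_k\vec\varphi_i\}$, and once by multiplying the expansion of $\vec v$ by $\epsilon(x)$ --- equates the temporal Fourier coefficients to obtain, for each $k$, a spatial identity between $\sum_i(\epsilon\epsilon\vec v,\psi_k\vec\varphi_i)_{L^2(Q)}\vec\varphi_i$ and $\sum_i(\epsilon\vec v,\psi_k\vec\varphi_i)_{L^2(Q)}\epsilon\vec\varphi_i$, and then substitutes this identity into the cosine series defining $\mathcal H_T(\epsilon\vec v)$; this is essentially the ``alternative'' you sketch at the end of Step~3. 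Your main route instead reduces to simple tensors $g(t)\vec w(x)$, where the commutation $\mathcal H_T(g\vec w)=(\mathcal H_T g)\vec w$ is immediate and $\epsilon\vec v=g\,(\epsilon\vec w)$ is again a simple tensor, and then passes to general $\vec v$ by density of the span of simple tensors together with the boundedness of $\mathcal H_T$ and $M_\epsilon$ on $L^2(Q;\R^d)$. What your version buys is modularity and generality: the argument shows that $\mathcal H_T$ commutes with \emph{any} bounded operator acting only in the spatial variable, and it isolates cleanly the one analytic ingredient (boundedness of $\mathcal H_T$ on $L^2(Q;\R^d)$, which you correctly derive from the equal normalization of the sine and cosine systems and the equivalence of the $\epsilon$-weighted and standard $L^2(Q)$ norms). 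What the paper's version buys is that it stays entirely inside the Fourier framework already set up for the other lemmas of the section and never needs to invoke density of simple tensors explicitly. Both are complete; there is no gap in your argument.
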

\begin{proof}
	Let $\vec v \in L^2(Q;\R^d)$ be a fixed function. With the normalized eigenfunction $\psi_k(t) = \sqrt{\frac{2}{T}} \sin \left( \Big( \frac{\pi}{2} + k\pi \Big) \frac{t}{T} \right)$, we have the representations
	\begin{align*}
		(\epsilon\vec v)(t,x) &= \sum_{i=-\infty}^\infty \sum_{k=0}^\infty (\epsilon \epsilon \vec v, \psi_k   \vec \varphi_i)_{L^2(Q)} \psi_k(t) \vec \varphi_i(x), \quad (t,x) \in Q, \\
		\epsilon(x) \vec v(t,x) &= \sum_{i=-\infty}^\infty \sum_{k=0}^\infty (\epsilon \vec v, \psi_k   \vec \varphi_i)_{L^2(Q)} \psi_k(t) \epsilon(x) \vec \varphi_i(x), \quad (t,x) \in Q,
	\end{align*}
	which lead to the equality
	\begin{equation*}
		\forall k \in \N_0: \quad \sum_{i=-\infty}^\infty  (\epsilon \epsilon \vec v, \psi_k   \vec \varphi_i)_{L^2(Q)} \vec \varphi_i(x) = \sum_{i=-\infty}^\infty (\epsilon \vec v, \psi_k   \vec \varphi_i)_{L^2(Q)} \epsilon(x) \vec \varphi_i(x)
	\end{equation*}
	for almost all $x \in \Omega$. Then we derive
	\begin{align*}
		\epsilon(x) ({\mathcal{H}}_T \vec v)(t,x) &= \sqrt{\frac{T}{2}}  \sum_{k=0}^\infty \cos \left( \Big( \frac{\pi}{2} + k\pi \Big) \frac{t}{T} \right) \sum_{i=-\infty}^\infty(\epsilon \vec v, \psi_k   \vec \varphi_i)_{L^2(Q)}  \epsilon(x) \vec \varphi_i(x) \\
		&= \sqrt{\frac{T}{2}}  \sum_{k=0}^\infty \cos \left( \Big( \frac{\pi}{2} + k\pi \Big) \frac{t}{T} \right) \sum_{i=-\infty}^\infty  (\epsilon \epsilon \vec v, \psi_k   \vec \varphi_i)_{L^2(Q)} \vec \varphi_i(x)  = \mathcal H_T ( \epsilon \vec v) (t,x)
	\end{align*}
	for almost all $(t,x) \in Q$, and hence, the assertion.
\end{proof}

Next, we derive the Galerkin--Bubnov formulation. With the mapping ${\mathcal{H}}_T \colon \, H^{\cu;1}_{0;0,} (Q) \to H^{\cu;1}_{0;,0} (Q)$, the variational formulation~\eqref{VF:WaveVec} is equivalent to:\\
Find $\vec A \in H^{\cu;1}_{0;0,} (Q)$ such that
\begin{equation}\label{VF:WaveVecHTv}
	-(\epsilon\partial_t \vec A,\partial_t \mathcal H_T \vec v)_{L^2(Q)}+ (\mu^{-1} \cu_x \vec A,\cu_x \mathcal H_T \vec v)_{L^2(Q)} = (\vec j, \mathcal H_T \vec v)_{L^2(Q)}
\end{equation}
for all $\vec v \in H^{\cu;1}_{0;0,} (Q).$\\
We rewrite the variational formulation~\eqref{VF:WaveVecHTv} using the properties~\eqref{VF:HTMinus1}, \eqref{VF:HTvertauschen} to get:\\
Find $\vec A \in H^{\cu;1}_{0;0,} (Q)$ such that
\begin{equation}
	\label{VF:WaveVecHT}
	(\epsilon \mathcal H_T \partial_t \vec A,\partial_t \vec v)_{L^2(Q)}+ (\mu^{-1} \cu_x \vec A,\cu_x \mathcal H_T \vec v)_{L^2(Q)} = (\vec j, \mathcal H_T \vec v)_{L^2(Q)}
\end{equation}
for all $ \vec v \in H^{\cu;1}_{0;0,} (Q),$ where also Lemma~\ref{VF:lem:Vertauschen} is applied. The variational formulation~\eqref{VF:WaveVecHT} has a unique solution, as the equivalent variational formulation~\eqref{VF:WaveVec} is uniquely solvable, see Theorem~\ref{VF:Thm:Vek}, and due to the fact that ${\mathcal{H}}_T \colon \, H^{\cu;1}_{0;0,} (Q) \to H^{\cu;1}_{0;,0} (Q)$ is an isometry, see Lemma~\ref{VF:lem:HTIsometrie}.

%-------------------------------------------------------------------------------
\section{Finite element spaces} \label{Sec:FES}

As both variational formulations, the Galerkin--Petrov formulation \eqref{VF:WaveVec} and the Galerkin--Bubnov formulation \eqref{VF:WaveVecHT}, are uniquely solvable, we investigate the discrete counterparts of these formulations. For that purpose, we first introduce the finite element spaces, which are used for discretizations of the variational formulations \eqref{VF:WaveVec} and \eqref{VF:WaveVecHT}. For comparison reasons, we aim at a similar notation as in \cite[Subsection~3.1]{HauserOhm2023}, where, however, piecewise quadratic instead of piecewise linear functions are used for the temporal discretizations.

Let us start with the spatial discretization. In this section, we assume that the bounded Lipschitz domain $\Omega \subset \R^d$ is polygonal for $d=2$, or polyhedral for $d=3$. Let $\nu \in \N_0$ be the refinement level and $( \mathcal T^x_\nu )_{\nu \in \N_0}$ the corresponding mesh sequence of admissible decompositions $\mathcal T^x_\nu$ of the spatial domain $\Omega$. On each level $\nu$ we decompose the spatial domain $\Omega$ into $N^x:=N^x_\nu$ triangles ($d=2$) or tetrahedra ($d=3$) denoted by $\omega_\iota \subset \R^d$ for $\iota=1,\dots, N^x$, satisfying $\overline{\Omega} = \bigcup_{\iota=1}^{N^x} \overline{\omega_{\iota}}.$ Note that also other types of elements $\omega_\iota$, e.g. rectangles ($d=2$), are possible. We emphasize that we define the local mesh sizes by $h_{x,\iota} = \left( \int_{\omega_\iota} \mathrm dx \right)^{1/d}$, $\iota=1,\dots, N^x$, since this particular choice occurs in the context of the CFL condition of the proposed Galerkin--Petrov method. Further, we define $h_x := h_{x,\max}(\mathcal T^x_\nu) := \max_{\iota=1,\dots, N^x} h_{x,\iota}$, and $h_{x,\min}(\mathcal T^x_\nu) := \min_{\iota=1,\dots, N^x} h_{x,\iota}$. 

For the investigations of CFL conditions, we consider for the rest of the paper a sequence $( \mathcal T^x_\nu )_{\nu \in \N_0}$, satisfying a shape-regularity and a global quasi-uniformity. As these terms are not used in a uniform manner, we recall their definition. The mesh sequence $( \mathcal T^x_\nu )_{\nu \in \N_0}$ is
\begin{itemize}
  \item shape-regular if $\exists c_\mathrm F > 0: \forall \nu \in \N_0 : \forall \omega \in \mathcal T^x_\nu: \quad \sup_{x,y \in \overline{\omega} } \norm{x- y}_2 \leq c_\mathrm F\, r_\omega,$
  \item globally quasi-uniform if $\exists c_\mathrm G \geq 1: \forall \nu \in \N_0 :  \frac{h_{x,\max}(\mathcal T^x_\nu)}{h_{x,\min}(\mathcal T^x_\nu)} \leq c_\mathrm G.$
\end{itemize}
Here, $r_\omega >0$ is the inradius of the element $\omega$, and $\norm{x-y}_2$ is the Euclidean distance of the points $x,y$ in $\R^d.$  The constant $c_{\mathrm F}$ influences the conditional stability, namely a CFL condition. To derive this CFL condition, we also need an inverse inequality for the spatial curl operator. The inverse inequality on the other hand is affected by the global quasi-uniformity for finite elements in $H(\cu;\Omega)$.

With these conditions on the spatial mesh in mind, we introduce three spatial approximation spaces of vector-valued type, see \cite[Section~19.2]{ErnGuermond2020I}, \cite[Section~3.5.1]{logg2012automated} or \cite[Section~5.5]{Monk2003} for more details. For this purpose, for $p \in \N_0$, $n \in \{1,2,3\}$ and $K \subset \R^n$, the polynomials of total degree $p$ are denoted by $\mathbb P^p_n(K)$. First, we define the space of vector-valued piecewise constant functions
\begin{equation*}
	S_d^0(\mathcal T^x_\nu) := \Big\{ \vec v_{h_x} \in L^2(\Omega;\R^d) : \forall \omega \in \mathcal T^x_\nu: \vec v_{h_x|\omega} \in \mathbb P^0_d(\omega)^d \Big\} = \mathrm{span} \{\vec \psi_\ell^0 \}_{\ell=1}^{N_x^0}, \\
\end{equation*}
where the $N_x^0$ basis functions $\vec \psi_\ell^0$ are the componentwise characteristic functions with respect to the spatial elements. Second, we set the lowest-order Raviart--Thomas finite element space by
\begin{equation*}
	\mathcal{RT}^0(\mathcal T^x_\nu) := \left\{ \vec v_{h_x} \in H(\di;\Omega) : \, \forall \omega \in \mathcal T^x_\nu : \vec v_{h_x| \omega} \in \mathcal{RT}^0(\omega) \right\} = \mathrm{span} \{\vec \psi_\ell^{\mathcal{RT}} \}_{\ell=1}^{N_x^{\mathcal{RT}}},
\end{equation*}
where the $N_x^{\mathcal{RT}}$ basis functions $\vec \psi_\ell^{\mathcal{RT}}$ are attached to the edges for $d=2$ and the faces for $d=3$ of the spatial mesh $\mathcal T^x_\nu$ and
\begin{equation*}
	\mathcal{RT}^0(\omega) := \Big\{ \vec P \in \mathbb P^1_d(\omega)^d: \forall x \in \omega: \vec P(x) = \vec a + b x \text{ with } \vec a \in \R^d, b \in \R \Big\}
\end{equation*}
is the local polynomial space for $\omega \in \mathcal T^x_\nu$, see \cite[Section~14.1]{ErnGuermond2020I}. Last, we introduce the lowest-order Nédélec finite element space of the first kind by
\begin{equation*}
	\mathcal N_\mathrm{I}^0(\mathcal T^x_\nu) := \left\{ \vec v_{h_x} \in H(\cu;\Omega) : \, \forall \omega \in \mathcal T^x_\nu : \vec v_{h_x| \omega} \in \mathcal N_\mathrm{I}^0(\omega) \right\},
\end{equation*}
and its subspace having zero tangential trace by
\begin{equation*}
	\mathcal N_\mathrm{I,0}^0(\mathcal T^x_\nu) := \mathcal N_\mathrm{I}^0(\mathcal T^x_\nu) \cap H_0(\cu;\Omega) = \mathrm{span} \{\vec \psi_\ell^{\mathcal{N}} \}_{\ell=1}^{N_x^{\mathcal{N}}},
\end{equation*}
where the $N_x^{\mathcal{N}}$ basis functions $\vec \psi_\ell^{\mathcal{N}}$ are attached to the edges of the spatial mesh $\mathcal T^x_\nu$, and the local polynomial spaces for $\omega \in \mathcal T^x_\nu$ are given by \begin{equation*}
	\mathcal N_\mathrm{I}^0(\omega) := \Big\{ \vec P \in \mathbb P^1_2(\omega)^2: \forall (x_1,x_2) \in \omega: \vec P(x_1,x_2) = \vec a + b \cdot (-x_2,x_1)^\top \text{ with } \vec a \in \R^2, b \in \R \Big\}
\end{equation*}
for $d=2$, and
\begin{equation*}
	\mathcal N_\mathrm{I}^0(\omega) := \Big\{ \vec P \in \mathbb P^1_3(\omega)^3: \forall x \in \omega: \vec P(x) = \vec a + \vec b \times x
	\text{ with } \vec a \in \R^3, \vec b \in \R^3 \Big\}
\end{equation*}
for $d=3$, see \cite[Section~15.1]{ErnGuermond2020I}.

Next, we investigate temporal finite element spaces. For this purpose, we consider a sequence of meshes $\{\mathcal T^t_{\alpha}\}_{\alpha \in \N_0}$ in time. Here, $\alpha \in \N_0$ is the level of refinement and $\mathcal T^t_\alpha =  \{ \tau_l \}_{l=1}^{N^t_\alpha}$ is the mesh defined by the decomposition
\begin{equation*}
	0 = t_0 < t_1 < \dots < t_{N^t_\alpha-1} < t_{N^t_\alpha} = T
\end{equation*}
of the time interval $(0,T)$, where $N^t_\alpha=:N^t$ is the number of temporal elements $\tau_l = (t_{l-1}, t_l) \subset \R$, $l=1,\dots, N^t,$ with mesh sizes $h_{t,l} = t_l - t_{l-1}$, $l=1,\dots, N^t,$ and $h_t := \max_{l=1,\dots, N^t} h_{t,l}.$ First, the space of piecewise constant functions in time is given by
\begin{equation*}
	S^0(\mathcal T^t_\alpha) := \Big\{ v_{h_t} \in L^2(0,T): \forall l \in \{ 1, \dots, N^t \}: v_{h_t|\tau_l} \in \mathbb P^0_1(\tau_l)  \Big\} =  \mathrm{span} \{\varphi^0_l\}_{l=1}^{N^t}
\end{equation*}
with the elementwise characteristic functions $\varphi^0_l$ as basis functions. Second, we write the space of piecewise linear, globally continuous functions as
\begin{equation*}
	S^1(\mathcal T^t_\alpha) := \Big\{ v_{h_t} \in C[0,T]: \forall l \in \{ 1, \dots, N^t \}: v_{h_t|\overline{\tau_l}} \in \mathbb P^1_1(\overline{\tau_l})  \Big\}  = \mathrm{span} \{\varphi^1_l\}_{l=0}^{N^t},
\end{equation*}
where the hat function $\varphi^1_l$ is related to $t_l$, i.e. $\varphi^1_l(t_k)=0$ for $k \neq l$ and $\varphi^1_l(t_l)=1$. As we need also approximation spaces that incorporate initial or terminal conditions, we define the subspace
\begin{equation*}
	S_{0,}^1(\mathcal T^t_\alpha) := S^1(\mathcal T^t_\alpha) \cap H^1_{0,}(0,T) = \mathrm{span} \{\varphi^1_l\}_{l=1}^{N^t},
\end{equation*}
fulfilling the homogeneous initial condition, and the subspace
\begin{equation*}
	S_{,0}^1(\mathcal T^t_\alpha) := S^1(\mathcal T^t_\alpha) \cap H^1_{,0}(0,T) = \mathrm{span} \{\varphi^1_l\}_{l=0}^{N^t-1},
\end{equation*}
satisfying the homogeneous terminal condition.

With the spatial and temporal approximation spaces, we introduce the conforming subspaces
\begin{align}
	S_{0,}^1(\mathcal T^t_\alpha) \otimes\mathcal N_\mathrm{I,0}^0(\mathcal T^x_\nu) &\subset H^{\cu;1}_{0;0,}(Q), \label{FES:TPSpaceA0}  \\
    S_{,0}^1(\mathcal T^t_\alpha) \otimes\mathcal N_\mathrm{I,0}^0(\mathcal T^x_\nu) &\subset H^{\cu;1}_{0;,0}(Q),  \label{FES:TPSpaceE0}
\end{align}
see \eqref{VF:Ansatzraum}, \eqref{VF:Testraum}, where $\otimes$ denotes the Hilbert tensor-product. Thus, these space-time approximation spaces are related to the space-time mesh $\{ \tau_l \times \omega_\iota : \, l=1,\dots,N^t, \, \iota = 1,\dots,N^x \}.$ In other words, the space-time cylinder $Q \subset \R^{d+1}$ is decomposed into $N^t \cdot N^x$ space-time elements $\tau_l \times \omega_\iota \subset \R^{d+1}$ for $l=1,\dots,N^t$, $\iota=1,\dots,N^x$.

Last, to approximate the right-hand side of the vectorial wave equation~\eqref{Einf:Maxwell}, we consider two different $L^2(Q)$ projections of $\vec j\in L^2(Q; \R^d)$. In Section~\ref{Sec:Num} on the numerical results we investigate the influence of the two types of projection on the convergence rates. Let us take a look at the definition of the projections for a given $\vec g \in L^2(Q; \R^d)$. First, the projection onto the space of piecewise constant functions $\Pi_h^0 \colon \, L^2(Q; \R^d) \to S^0(\mathcal T^t_\alpha) \otimes S_d^0(\mathcal T^x_\nu)$ is defined as solution $\Pi_h^0 \vec g \in S^0(\mathcal T^t_\alpha) \otimes S_d^0(\mathcal T^x_\nu)$ such that
\begin{equation} \label{FES:L2Konst}
 \quad (\Pi_h^0 \vec g, \vec w_h)_{L^2(Q)} = (\vec g, \vec w_h)_{L^2(Q)}
\end{equation}
for all $\vec w_h \in S^0(\mathcal T^t_\alpha) \otimes S_d^0(\mathcal T^x_\nu).$ Second, the projection onto the space of piecewise linear functions in time and Raviart--Thomas functions in space $\Pi_h^{\mathcal{RT},1} \colon \, L^2(Q; \R^d) \to S^1(\mathcal T^t_\alpha) \otimes \mathcal{RT}^0(\mathcal T^x_\nu)$ is given as solution $\Pi_h^{\mathcal{RT},1} \vec g \in S^1(\mathcal T^t_\alpha) \otimes \mathcal{RT}^0(\mathcal T^x_\nu)$ such that
\begin{equation} \label{FES:L2RT}
\quad (\Pi_h^{\mathcal{RT},1} \vec g, \vec w_h)_{L^2(Q)} = (\vec g, \vec w_h)_{L^2(Q)}
\end{equation}
for all $\vec w_h \in S^1(\mathcal T^t_\alpha) \otimes \mathcal{RT}^0(\mathcal T^x_\nu).$ Note that both $L^2(Q)$ projections possibly have inhomogeneous Dirichlet, initial or terminal conditions.

%-------------------------------------------------------------------------------
\section{FEM for the vectorial wave equation} \label{Sec:FEM}

Following the introduction to the finite element spaces in the last section, we derive conforming space-time discretizations for the variational formulations~\eqref{VF:WaveVec}, \eqref{VF:WaveVecHT}, using the notation of Section~\ref{Sec:FES}. For this purpose, let the bounded Lipschitz domain $\Omega \subset \R^d$ be polygonal for $d=2$, or polyhedral for $d=3$. Additionally, we assume Assumption~\ref{VF:Voraussetzung} and the assumptions on the mesh stated in Section~\ref{Sec:FES}.

%-------------------------------------------------------------------------------
\subsection{Galerkin--Petrov FEM} \label{Sec:FEM:WithoutHT}

In this subsection, we state the conforming discretization of the variational formulation~\eqref{VF:WaveVec}, using the tensor-product spaces \eqref{FES:TPSpaceA0}, \eqref{FES:TPSpaceE0}, 
by a Galerkin--Petrov finite element method:\\
Find $\vec A_h \in S_{0,}^1(\mathcal T^t_\alpha) \otimes \mathcal N_\mathrm{I,0}^0(\mathcal T^x_\nu)$ such that
\begin{equation}  \label{FEM:DiskVFOhneHT}
	-(\epsilon \partial_t \vec A_h, \partial_t \vec w_h )_{L^2(Q)} + (\mu^{-1} \cu_x \vec A_h, \cu_x \vec w_h )_{L^2(Q)}  = (\Pi_h \vec j, \vec w_h)_{L^2(Q)}
\end{equation}
for all $\vec w_h \in S_{,0}^1(\mathcal T^t_\alpha) \otimes \mathcal N_\mathrm{I,0}^0(\mathcal T^x_\nu)$. Hence, we approximate the solution $\vec A \in H^{\cu;1}_{0;0,} (Q)$ of the variational formulation~\eqref{VF:WaveVec} by
\begin{equation} \label{FEM:DarstellungAh}
	\vec A(t,x) \approx \vec A_h(t,x) = \sum_{k=1}^{N^t} \sum_{\kappa=1}^{N_x^{\mathcal{N}}} \mathcal A_\kappa^k \varphi^1_k(t) \vec \psi^\mathcal{N}_\kappa(x), \quad (t,x) \in Q,
\end{equation}
with coefficients $\mathcal A_\kappa^k \in \R$. The operator $\Pi_h$ in \eqref{FEM:DiskVFOhneHT} is either the $L^2(Q)$ projection $\Pi_h^0$ onto the space of piecewise constant functions defined in \eqref{FES:L2Konst} or the $L^2(Q)$ projection $\Pi_h^{\mathcal{RT},1}$ onto piecewise linear functions in time and Raviart--Thomas functions in space given in \eqref{FES:L2RT}. The reason to replace the right-hand side $\vec j$ with $\Pi_h \vec j$ in \eqref{FEM:DiskVFOhneHT} is the comparison with the Galerkin--Bubnov finite element method~\eqref{FEM:WithHT:DiskVFMitHT}, using the modified Hilbert transformation $\mathcal H_T$. An alternative is the use of formulas for numerical integration applied to $(\vec j, \vec w_h)_{L^2(Q)}$. Note that such formulas for numerical integration have to be sufficiently accurate to preserve the convergence rates of the finite element method, see the numerical examples in Section~\ref{Sec:Num}.

Next, we use the representation \eqref{FEM:DarstellungAh} to rewrite the discrete variational formulation~\eqref{FEM:DiskVFOhneHT} into the equivalent linear system 
\begin{equation} \label{FEM:WithoutHT:LGS}
	(-A_{h_t}^1 \otimes M_{h_x}^{\mathcal{N}} +  M_{h_t}^1 \otimes A_{h_x}^{\mathcal{N}}) \vec{ \mathcal A }= \vec{\mathcal J}.
\end{equation}
Here, we order the degrees of freedom first in time then in space so that the coefficient vector in  \eqref{FEM:DarstellungAh} can be written as
\begin{equation} \label{FEM:WithoutHT:VectorAh}
	\vecA = ( \vecA^1, \vecA^2, \dots, \vecA^{N^t} )^\top \in \R^{N^t N_x^{\mathcal{N}}},
\end{equation}
where
\begin{equation*}
	\vecA^k = (\mathcal A_1^k, \mathcal A_2^k, \dots, \mathcal A_{N_x^{\mathcal{N}}}^k)^\top \in \R^{N_x^{\mathcal{N}}} \quad \text{ for } k =1,\dots, N^t.
\end{equation*}
Further, for the linear system~\eqref{FEM:WithoutHT:LGS}, we define the spatial matrices by 
\begin{equation} \label{FEM:WithoutHT:Ortsmatrizen}
	A_{h_x}^{\mathcal{N}}[\ell,\kappa] = (\mu^{-1}\cu_x \vec \psi^\mathcal{N}_\kappa, \cu_x \vec \psi^\mathcal{N}_{\ell})_{L^2(\Omega)}, \quad M_{h_x}^{\mathcal{N}}[\ell,\kappa] = (\epsilon \vec \psi^\mathcal{N}_\kappa, \vec \psi^\mathcal{N}_{\ell})_{L^2(\Omega)}
\end{equation}
for $\ell,\kappa=1,\dots,N_x^{\mathcal{N}}$ and the temporal matrices by
\begin{equation}  \label{FEM:WithoutHT:TimeMatrices}
	A_{h_t}^1[l,k] = (\partial_t \varphi^1_k, \partial_t \varphi^1_l)_{L^2(0,T)}, \quad M_{h_t}^1[l,k] = (\varphi^1_k, \varphi^1_l)_{L^2(0,T)}
\end{equation}
for  $l=0,\dots,N^t-1$, $k=1,\dots,N^t$ as well as the right-hand side by
\begin{equation*}
	\vec{\mathcal J} = ( \vec f^0, \vec f^1, \dots, \vec f^{N^t-1} )^\top \in \R^{N^t N_x^{\mathcal{N}}}
\end{equation*}
with $\vec f^l = (f_1^l, f_2^l, \dots, f_{N_x^{\mathcal{N}}}^l)^\top \in \R^{N_x^{\mathcal{N}}}$ for $l = 0,\dots, N^t-1$ and
\begin{equation} \label{FEM:WithoutHT:RSf}
	f_\ell^l = (\Pi_h \vec j,\varphi^1_l \vec \psi^\mathcal{N}_{\ell} )_{L^2(Q)} \quad \text{ for } l=0,\dots, N^t-1, \, \ell=1,\dots,N_x^{\mathcal{N}}.
\end{equation}

For the stability analysis of the linear system \eqref{FEM:WithoutHT:LGS}, we take a closer look at the system matrix. The system matrix of the linear system~\eqref{FEM:WithoutHT:LGS} is sparse and allows for a realization as a two-step method, due to the sparsity pattern of the temporal matrices $A_{h_t}^1$, $M_{h_t}^1$ in \eqref{FEM:WithoutHT:TimeMatrices}. Further, this sparsity pattern forms the basis of classical stability analysis of the Galerkin--Petrov finite element method~\eqref{FEM:DiskVFOhneHT} in the framework of time-stepping or finite difference methods. See \cite[Section~5]{SteinbachZankETNA2020} for the scalar wave equation, where the ideas can be extended to the vectorial wave equation, see the discussion in \cite[Section~3.5.2]{HauserDiss2021} or \cite{HauserKurzSteinbach2023}. Here, we skip details and only state that the Galerkin--Petrov finite element method ~\eqref{FEM:DiskVFOhneHT} is stable if the temporal mesh is uniform with mesh size $h_t$ and the CFL condition
\begin{equation} \label{FEM:WithoutHT:CFL}
	h_t < \sqrt{\frac{12}{c_\mathrm{I}}}h_x
\end{equation}
is satisfied with the constant $c_\mathrm{I} > 0$ of the spatial inverse inequality
\begin{equation} \label{FEM:WithoutHT:InverseUngleichung}
	\forall \vec v_{h_x} \in \mathcal N_\mathrm{I}^0(\mathcal T^x_\nu) : \quad \norm{\cu_x  \vec v_{h_x}}_{L^2(\Omega)}^2 \leq c_\mathrm{I} h_x^{-2} \norm{ \vec v_{h_x}}_{L^2(\Omega)}^2.
\end{equation}
Bounds for the constant $c_\mathrm{I} > 0$ are given in \cite[Lemma~A.2]{HauserDiss2021} or \cite{HauserOhm2023, HauserKurzSteinbach2023}.

As an example domain, which is also used in Section~\ref{Sec:Num}, we consider the unit square $\Omega = (0,1) \times (0,1) \subset \R^2$. Using uniform triangulations with isosceles right triangles as in Figure~\ref{FEM:WithoutHT:Fig:Quadrat} yields the constant $c_\mathrm I = 18$. This result is proven by adapting the arguments of the proof of \eqref{FEM:WithoutHT:InverseUngleichung}, given in \cite[Lemma~A.2]{HauserDiss2021} or \cite{HauserKurzSteinbach2023}, of the general situation to the case of isosceles right triangles. More precisely, in the proof of \cite[Lemma~A.2]{HauserDiss2021}, the matrix $J_l J_l^\top$ is diagonal for isosceles right triangles, where $J_l$ denotes the Jacobian matrix of the geometric mapping from the reference element to the physical one. Thus, the eigenvalues of $J_l J_l^\top$ are known explicitly. So, in this situation, the CFL condition~\eqref{FEM:WithoutHT:CFL} reads as
\begin{equation} \label{FEM:WithoutHT:CFLQuadrat}
	h_t < \sqrt{\frac{12 }{18}} h_x\approx 0.81649658\ h_x.
\end{equation}
Numerical examples indicate the sharpness of the CFL condition~\eqref{FEM:WithoutHT:CFLQuadrat}, see Subsection~\ref{Sec:Num:Ohne}.
\begin{figure}[ht]
	\begin{center}
		\begin{tikzpicture}
			\draw (0,0) rectangle(3,3);
			\draw (0,1.5) -- (1.5,3);
			\draw (1.5,0) -- (3,1.5);
			\draw (1.5,0) -- (1.5,1.5);
			\draw (0,0) -- (3,3);
			\draw (0,1.5) -- (1.5,1.5);
			\draw (1.5,3) -- (1.5,1.5);
			\draw (1.5,1.5) -- (3,1.5);
			\draw[->] (0,0) -- (3.4,0);
			\draw[->] (0,0) -- (0,3.4);
			\fill[black]  (3.5,0) circle [radius=0pt] node[below] {\footnotesize$x_{1}$}; 
			\fill[black]  (0,3.4) circle [radius=0pt] node[left] {\footnotesize$x_{2}$}; 
			\fill[black]  (-0.2,0) circle [radius=0pt] node[below] {\footnotesize$0$}; 
			\fill[black]  (3,0) circle [radius=0pt] node[below] {\footnotesize$1$}; 
			\fill[black]  (0,3) circle [radius=0pt] node[left] {\footnotesize$1$}; 
		\end{tikzpicture}
		\caption{Uniform triangulations of the unit square with isosceles right triangles.}\label{FEM:WithoutHT:Fig:Quadrat}
	\end{center}
\end{figure}
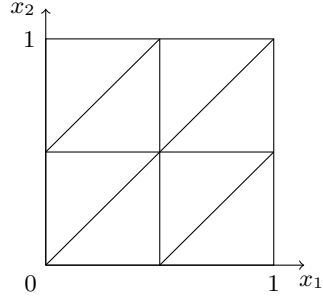

%-------------------------------------------------------------------------------
\subsubsection{Using the $L^2(Q)$ projection $\Pi_h = \Pi_h^0$ for the right-hand side $\vec{\mathcal J}$} \label{Sec:FEM:WithoutHT:ProjKonst}

We present the calculation of the right-hand side $\vec{\mathcal J}$ of the linear system~\eqref{FEM:WithoutHT:LGS} when $\Pi_h$ is the $L^2(Q)$ projection onto $S^0(\mathcal T^t_\alpha) \otimes S_d^0(\mathcal T^x_\nu)$ satisfying
\[
    (\Pi_h^0 \vec j, \vec w_h)_{L^2(Q)} = (\vec j, \vec w_h)_{L^2(Q)}
\]
for all $ \vec w_h \in S^0(\mathcal T^t_\alpha) \otimes S_d^0(\mathcal T^x_\nu).$ Then the projection $\Pi_h^0 \vec j $ is the solution of the linear system
\begin{equation} \label{FEM:WithoutHT:jProjKonst}
	M_{h_t}^0 \otimes M_{h_x}^0 (\vec j^1, \dots, \vec j^{N^t})^\top = (\hat{\vec j}^1,\dots, \hat{ \vec j}^{N^t})^\top
\end{equation}
with the matrices and vectors
\begin{align*}
	M_{h_t}^0[l,k] &= (\varphi^0_k,\varphi^0_l)_{L^2(0,T)},  &&l,k=1,\dots,N^t, \\
	M_{h_x}^0[\ell,\kappa] &= (\vec \psi^0_\kappa, \vec \psi^0_\ell)_{L^2(\Omega)},  &&\ell,\kappa=1,\dots,N_x^0, \\
	\vec j^k[\kappa] &= j^k_\kappa ,  &&k=1,\dots,N^t, \; \kappa=1,\dots, N_x^0, \\
	\hat{\vec j}^k[\kappa] &= (\vec j, \varphi^0_k \vec \psi^0_\kappa)_{L^2(Q)} ,  &&k=1,\dots,N^t, \; \kappa=1,\dots, N_x^0,
\end{align*}
through the representation
\begin{equation*}
	\Pi_h^0 \vec j(t,x) = \sum_{k=1}^{N^t} \sum_{\kappa=1}^{N_x^0} j^k_\kappa \varphi^0_k(t) \vec \psi^0_\kappa(x), \quad (t,x) \in Q.
\end{equation*}
Thus, using this representation for the right-hand side \eqref{FEM:WithoutHT:RSf} of the space-time Galerkin--Petrov method \eqref{FEM:WithoutHT:LGS} yields
\begin{equation*}
	f_\ell^l = (\Pi_h^0 \vec j, \varphi^1_l \vec \psi^\mathcal{N}_{\ell} )_{L^2(Q)} =  \sum_{k=1}^{N^t} \sum_{\kappa=1}^{N_x^0} j^k_\kappa \, \underbrace{ (\varphi^0_k, \varphi^1_l)_{L^2(0,T)}}_{= M_{h_t}^{1,0}[l,k]} \, \underbrace{ (\vec \psi^0_\kappa, \vec \psi^\mathcal{N}_{\ell})_{L^2(\Omega)}}_{=M_{h_x}^{\mathcal{N},0}[\ell,\kappa]}  = F[\ell,l]
\end{equation*}
for $\ell=1,\dots,N_x^{\mathcal{N}},$ $l = 0,\dots, N^t-1$ with the matrix
\begin{equation*}
	F = M_{h_x}^{\mathcal{N},0} J ( M_{h_t}^{1,0})^\top  \in \R^{N_x^{\mathcal{N}} \times N^t},
\end{equation*}
where
\begin{align}
	M_{h_x}^{\mathcal{N},0}[\ell,\kappa] &= (\vec \psi^0_\kappa, \vec \psi^\mathcal{N}_{\ell})_{L^2(\Omega)}  &&\ell=1,\dots,N_x^{\mathcal{N}}, \; \kappa=1,\dots,N_x^0, \label{FEM:WithoutHT:MasseN0} \\
	J[\kappa,k] &= j_\kappa^k,  &&\kappa=1,\dots,N_x^0, \; k=1,\dots, N^t, \label{FEM:WithoutHT:JKonst} \\
	M_{h_t}^{1,0}[l,k] &= (\varphi^0_k, \varphi^1_l)_{L^2(0,T)}, &&l=0,\dots,N^t-1, \; k=1,\dots, N^t.  \nonumber
\end{align}

%-------------------------------------------------------------------------------
\subsubsection{Using the $L^2(Q)$ projection $\Pi_h=\Pi_h^{\mathcal{RT},1}$ for the right-hand side $\vec{\mathcal J}$} \label{Sec:FEM:WithoutHT:ProjRT}

Analogously to Subsection~\ref{Sec:FEM:WithoutHT:ProjKonst}, we present the calculation of the right-hand side $\vec{\mathcal J}$ of the linear system~\eqref{FEM:WithoutHT:LGS} when $\Pi_h$ is the $L^2(Q)$ projection onto $S^1(\mathcal T^t_\alpha) \otimes \mathcal{RT}^0(\mathcal T^x_\nu)$ given in \eqref{FES:L2RT}. The projection $\Pi_h^{\mathcal{RT},1} \vec j$ is the solution of the linear system
\begin{equation} \label{FEM:WithoutHT:jProjRT}
	\widetilde{M}_{h_t}^1 \otimes M_{h_x}^\mathcal{RT} ( \vec j^0, \vec j^1, \dots, \vec j^{N^t})^\top = (\hat{\vec j}^0,\hat{\vec j}^1,\dots, \hat{ \vec j}^{N^t})^\top
\end{equation}
with the matrices and vectors
\begin{align}
	\widetilde{M}_{h_t}^1[l,k] &= (\varphi^1_k,\varphi^1_l)_{L^2(0,T)},  &&l,k=0,\dots,N^t,  \label{FEM:WithoutHT:MatrixMhtVoll} \\
	M_{h_x}^\mathcal{RT}[\ell,\kappa] &= (\vec \psi^\mathcal{RT}_\kappa, \vec \psi^\mathcal{RT}_\ell)_{L^2(\Omega)},  &&\ell,\kappa=1,\dots,N_x^{\mathcal{RT}}, \nonumber \\
	\vec j^k[\kappa] &= j^k_\kappa ,  &&k=0,\dots,N^t, \; \kappa=1,\dots, N_x^{\mathcal{RT}}, \nonumber \\
	\hat{\vec j}^k[\kappa] &= (\vec j, \varphi^1_k \vec \psi^\mathcal{RT}_\kappa)_{L^2(Q)} ,  &&k=0,\dots,N^t, \; \kappa=1,\dots, N_x^{\mathcal{RT}},  \nonumber
\end{align}
by the representation
\begin{equation*}
	\Pi_h^{\mathcal{RT},1} \vec j(t,x) = \sum_{k=0}^{N^t} \sum_{\kappa=1}^{N_x^{\mathcal{RT}}} j^k_\kappa \varphi^1_k(t) \vec \psi^\mathcal{RT}_\kappa(x), \quad (t,x) \in Q.
\end{equation*}
Thus, using this representation for the right-hand side \eqref{FEM:WithoutHT:RSf} yields
\begin{equation*}
	f_\ell^l = (\Pi_h^{\mathcal{RT},1} \vec j, \varphi^1_l \vec \psi^\mathcal{N}_{\ell} )_{L^2(Q)} =  \sum_{k=0}^{N^t} \sum_{\kappa=1}^{N_x^{\mathcal{RT}}} j^k_\kappa \, \underbrace{ (\varphi^1_k, \varphi^1_l)_{L^2(0,T)}}_{=\widehat M_{h_t}^1[l,k]} \, \underbrace{ (\vec \psi^\mathcal{RT}_\kappa, \vec \psi^\mathcal{N}_{\ell})_{L^2(\Omega)}}_{=M_{h_x}^{\mathcal{N},\mathcal{RT}}[\ell,\kappa]}  = F[\ell,l]
\end{equation*}
for  $\ell=1,\dots,N_x^{\mathcal{N}},$ $l = 0,\dots, N^t-1$ with the matrix
\begin{equation*}
	F = M_{h_x}^{\mathcal{N},\mathcal{RT}} J (\widehat M_{h_t}^1)^\top  \in \R^{N_x^{\mathcal{N}} \times N^t},
\end{equation*}
where
\begin{align}
	M_{h_x}^{\mathcal{N},\mathcal{RT}}[\ell,\kappa] &= (\vec \psi^\mathcal{RT}_\kappa, \vec \psi^\mathcal{N}_{\ell})_{L^2(\Omega)},  &&\ell=1,\dots,N_x^{\mathcal{N}}, \; \kappa=1,\dots,N_x^{\mathcal{RT}}, \label{FEM:WithoutHT:MasseNRT} \\
	J[\kappa,k] &= j_\kappa^k,  &&\kappa=1,\dots,N_x^{\mathcal{RT}}, \; k=0,\dots, N^t, \label{FEM:WithoutHT:Jdiv} \\
	\widehat M_{h_t}^1[l,k] &= (\varphi^1_k, \varphi^1_l)_{L^2(0,T)},  &&l=0,\dots,N^t-1, \; k=0,\dots, N^t.  \label{FEM:WithoutHT:MatrixMhtDach}
\end{align}
Note that the matrices $M_{h_t}^1 \in \R^{N^t \times N^t}$ in \eqref{FEM:WithoutHT:TimeMatrices} and $\widehat M_{h_t}^1 \in \R^{N^t \times (N^t+1)}$ in \eqref{FEM:WithoutHT:MatrixMhtDach} are submatrices of the matrix $\widetilde{M}_{h_t}^1 \in \R^{(N^t + 1) \times (N^t+1)}$ in \eqref{FEM:WithoutHT:MatrixMhtVoll}.

%-------------------------------------------------------------------------------
\subsection{Galerkin--Bubnov FEM}\label{Sec:GalerkinBubnovFEM}

In this subsection, we discretize the variational formulation~\eqref{VF:WaveVecHT} where we apply the modified Hilbert transformation. We use a tensor-product ansatz and the conforming finite element space~\eqref{FES:TPSpaceA0}. In particular, we consider the Galerkin--Bubnov finite element method to find $\vec A_h \in S_{0,}^1(\mathcal T^t_\alpha) \otimes \mathcal N_\mathrm{I,0}^0(\mathcal T^x_\nu)$ such that
\begin{equation}  \label{FEM:WithHT:DiskVFMitHT}
	(\epsilon \mathcal H_T \partial_t \vec A_h, \partial_t \vec v_h )_{L^2(Q)} + ( \mu^{-1}\cu_x \vec A_h, \cu_x \mathcal H_T \vec v_h )_{L^2(Q)}  = (\Pi_h \vec j, \mathcal H_T \vec v_h)_{L^2(Q)}
\end{equation}
for all $\vec v_h \in S_{0,}^1(\mathcal T^t_\alpha) \otimes \mathcal N_\mathrm{I,0}^0(\mathcal T^x_\nu)$. Again, $\Pi_h$ is either the $L^2(Q)$ projection $\Pi_h^0$ defined in \eqref{FES:L2Konst} or the $L^2(Q)$ projection $\Pi_h^{\mathcal{RT},1}$ given in \eqref{FES:L2RT}.

The discrete variational formulation~\eqref{FEM:WithHT:DiskVFMitHT} is equivalent to the linear system
\begin{equation} \label{FEM:WithHT:LGS}
	(A_{h_t}^{\mathcal H_T} \otimes M_{h_x}^{\mathcal{N}} +  M_{h_t}^{\mathcal H_T} \otimes A_{h_x}^{\mathcal{N}}) \vec{ \mathcal A }= \vec{\mathcal J}^{\mathcal H_T}
\end{equation}
with the spatial matrices $A_{h_x}^{\mathcal{N}}, M_{h_x}^{\mathcal{N}}$ given in \eqref{FEM:WithoutHT:Ortsmatrizen} and
the temporal matrices
\begin{equation} \label{FEM:WithHT:TimeMatrices}
	A_{h_t}^{\mathcal H_T}[l,k] = (\mathcal H_T \partial_t \varphi^1_k, \partial_t \varphi^1_l)_{L^2(0,T)}, \quad M_{h_t}^{\mathcal H_T}[l,k] = (\varphi^1_k, \mathcal H_T \varphi^1_l)_{L^2(0,T)}
\end{equation}
for $l,k=1,\dots,N^t,$ where $\mathcal H_T$, defined in Section~\ref{Sec:HT}, acts solely on time-dependent functions. As in Subsection~\ref{Sec:FEM:WithoutHT}, we use again the representation~\eqref{FEM:DarstellungAh} of $\vec A_h$ and the vector $\vecA \in \R^{N^t N_x^{\mathcal{N}}}$ of its coefficients~\eqref{FEM:WithoutHT:VectorAh}. The right-hand side of the linear system~\eqref{FEM:WithHT:LGS} is given by
\begin{equation*}
	\vec{\mathcal J}^{\mathcal H_T} = ( \vec{\mathcal J}^1, \vec{\mathcal J}^2, \dots, \vec{\mathcal J}^{N^t} )^\top \in \R^{N^t N_x^{\mathcal{N}}}
\end{equation*}
with
\begin{equation*}
	\vec{\mathcal J}^l = ({\mathcal J}_1^l, {\mathcal J}_2^l, \dots, {\mathcal J}_{N_x^{\mathcal{N}}}^l)^\top \in \R^{N_x^{\mathcal{N}}} \quad \text{ for } l = 1,\dots, N^t,
\end{equation*}
where
\begin{equation} \label{FEM:WithHT:RechteSeite}
	{\mathcal J}_\ell^l = (\Pi_h \vec j, \vec \psi^\mathcal{N}_{\ell} \mathcal H_T \varphi^1_l)_{L^2(Q)} \quad \text{ for } l=1,\dots, N^t, \, \ell=1,\dots,N_x^{\mathcal{N}}.
\end{equation}

Let us take a look at the solvability of the system matrix of the linear system \eqref{FEM:WithHT:LGS}. The temporal matrices $A_{h_t}^{\mathcal H_T}$, $M_{h_t}^{\mathcal H_T}$ in \eqref{FEM:WithHT:TimeMatrices} are positive definite due to property~\eqref{HT:positiv}. In addition, the spatial matrix $M_{h_x}^{\mathcal{N}}$ in \eqref{FEM:WithoutHT:Ortsmatrizen} is also positive definite, whereas the spatial matrix $A_{h_x}^{\mathcal{N}}$ in \eqref{FEM:WithoutHT:Ortsmatrizen} is only positive semi-definite. Hence, the Kronecker product $A_{h_t}^{\mathcal H_T} \otimes M_{h_x}^{\mathcal{N}}$ is positive definite. On the other hand, the product $ M_{h_t}^{\mathcal H_T} \otimes A_{h_x}^{\mathcal{N}}$ is positive semi-definite. Adding both results in a positive definite system matrix of the linear system~\eqref{FEM:WithHT:LGS}. In other words, the linear system~\eqref{FEM:WithHT:LGS} is uniquely solvable. Note that, compared to the static case, we do not need any stabilization to get unique solvability of the linear system~\eqref{FEM:WithHT:LGS} and hence of the corresponding discrete variational formulation~\eqref{FEM:WithHT:DiskVFMitHT}. However, further details on the numerical analysis of the Galerkin--Bubnov finite element method~\eqref{FEM:WithHT:DiskVFMitHT} are far beyond the scope of this contribution, we refer to \cite{LoescherSteinbachZankWelleTheorie, LoescherSteinbachZankDD} for the case of the scalar wave equation.

In addition, note that the temporal matrices $A_{h_t}^{\mathcal H_T}$, $M_{h_t}^{\mathcal H_T}$ in \eqref{FEM:WithHT:TimeMatrices} are dense, whereas the spatial matrices $A_{h_x}^{\mathcal{N}}, M_{h_x}^{\mathcal{N}}$ in \eqref{FEM:WithoutHT:Ortsmatrizen} are sparse. Thus, the linear system~\eqref{FEM:WithHT:LGS} does not allow for a realization as a multistep method. However, the application of fast (direct) solvers, as known for heat and scalar wave equations \cite{LangerZankSISC2021,ZankWelleLoeser,ZankWaermeLoeserICOSAHOM2022}, is possible, which is the topic of future work.

%-------------------------------------------------------------------------------
\subsubsection{Using the $L^2(Q)$ projection $\Pi_h = \Pi_h^0$ for the right-hand side $\vec{\mathcal J}^{\mathcal H_T}$} \label{Sec:FEM:WithHT:ProjKonst}

In this subsection, we present the calculation of the right-hand side $\vec{\mathcal J}^{\mathcal H_T}$ of the linear system~\eqref{FEM:WithHT:LGS} when $\Pi_h$ is the $L^2(Q)$ projection onto $S^0(\mathcal T^t_\alpha) \otimes S_d^0(\mathcal T^x_\nu)$ given in \eqref{FES:L2Konst}. Since this subsection is similar to Subsection~\ref{Sec:FEM:WithoutHT:ProjKonst}, we skip the details.

Then the entries~\eqref{FEM:WithHT:RechteSeite} of the right-hand side $\vec{\mathcal J}^{\mathcal H_T}$ admit the representation
\begin{equation*}
	\mathcal J_\ell^l = (\Pi_h^0 \vec j, \vec \psi^\mathcal{N}_{\ell} \mathcal H_T \varphi^1_l)_{L^2(Q)} = F^{\mathcal H_T}[\ell,l]
\end{equation*}
for $\ell=1,\dots,N_x^{\mathcal{N}},$ $l = 1,\dots, N^t$ with the matrix
\begin{equation*}
	F^{\mathcal H_T} = M_{h_x}^{\mathcal{N},0} J (M_{h_t}^{\mathcal H_T1,0})^\top  \in \R^{N_x^{\mathcal{N}} \times N^t},
\end{equation*}
where $M_{h_x}^{\mathcal{N},0}$ is defined in \eqref{FEM:WithoutHT:MasseN0}, $J$ is given in \eqref{FEM:WithoutHT:JKonst} and
\begin{equation*}
	M_{h_t}^{\mathcal H_T1,0}[l,k] = (\varphi^0_k, \mathcal H_T \varphi^1_l)_{L^2(0,T)}, \quad l=1,\dots,N^t, \; k=1,\dots, N^t.
\end{equation*}

%-------------------------------------------------------------------------------
\subsubsection{Using the $L^2(Q)$ projection $\Pi_h=\Pi_h^{\mathcal{RT},1}$ for the right-hand side $\vec{\mathcal J}^{\mathcal H_T}$}

Analogously to Subsection~\ref{Sec:FEM:WithHT:ProjKonst}, we present the calculation of the right-hand side $\vec{\mathcal J}^{\mathcal H_T}$ of the linear system~\eqref{FEM:WithHT:LGS} when $\Pi_h$ is the $L^2(Q)$ projection onto $S^1(\mathcal T^t_\alpha) \otimes \mathcal{RT}^0(\mathcal T^x_\nu)$ given in \eqref{FES:L2RT}. As for Subsection~\ref{Sec:FEM:WithHT:ProjKonst}, we skip the details, which are analogous to Subsection~\ref{Sec:FEM:WithoutHT:ProjRT}.

Then the entries~\eqref{FEM:WithHT:RechteSeite}  of the right-hand side $\vec{\mathcal J}^{\mathcal H_T}$ admit the representation
\begin{equation*}
	\mathcal J_\ell^l = (\Pi_h^{\mathcal{RT},1} \vec j, \vec \psi^\mathcal{N}_{\ell} \mathcal H_T \varphi^1_l)_{L^2(Q)} = F^{\mathcal H_T}[\ell,l]
\end{equation*}
for  $\ell=1,\dots,N_x^{\mathcal{N}},$ $l = 1,\dots, N^t$ with the matrix
\begin{equation*}
	F^{\mathcal H_T} = M_{h_x}^{\mathcal{N},\mathcal{RT}} J (\widehat M_{h_t}^{\mathcal H_T})^\top  \in \R^{N_x^{\mathcal{N}} \times N^t},
\end{equation*}
where $M_{h_x}^{\mathcal{N},\mathcal{RT}}$ is defined in \eqref{FEM:WithoutHT:MasseNRT}, $J$ is given in \eqref{FEM:WithoutHT:Jdiv} and
\begin{equation}  \label{FEM:WithHT:MasseHTDach}
	\widehat M_{h_t}^{\mathcal H_T}[l,k] = (\varphi^1_k, \mathcal H_T \varphi^1_l)_{L^2(0,T)}, \quad  l=1,\dots,N^t, \; k=0,\dots, N^t.
\end{equation}
Note that the matrix $M_{h_t}^{\mathcal H_T} \in \R^{N^t \times N^t}$ in \eqref{FEM:WithHT:TimeMatrices} is a submatrix of $\widehat M_{h_t}^{\mathcal H_T} \in \R^{N^t \times (N^t+1)}$ in \eqref{FEM:WithHT:MasseHTDach}.

%-------------------------------------------------------------------------------
\section{Numerical examples for the vectorial wave equation} \label{Sec:Num}

In this section, we give numerical examples of the conforming space-time finite element methods~\eqref{FEM:DiskVFOhneHT} and \eqref{FEM:WithHT:DiskVFMitHT} for a spatially two-dimensional domain $\Omega \subset \R^2$, i.e. $d=2$. For this purpose, we consider the unit square $\Omega = (0,1) \times (0,1)$, and set $\epsilon(x) = \begin{pmatrix}
	1 & 0 \\
	0 & 1   
\end{pmatrix}$, 
$\mu(x)=1$ for $x \in \Omega$. The spatial meshes $\mathcal T^x_\nu$ are given by uniform decompositions of the spatial domain $\Omega$ into isosceles right triangles, where a uniform refinement strategy is applied, see Figure~\ref{Num:Fig:NetzOrt}.
\begin{figure}[ht]
	\begin{center}
		\includegraphics[scale=0.6]{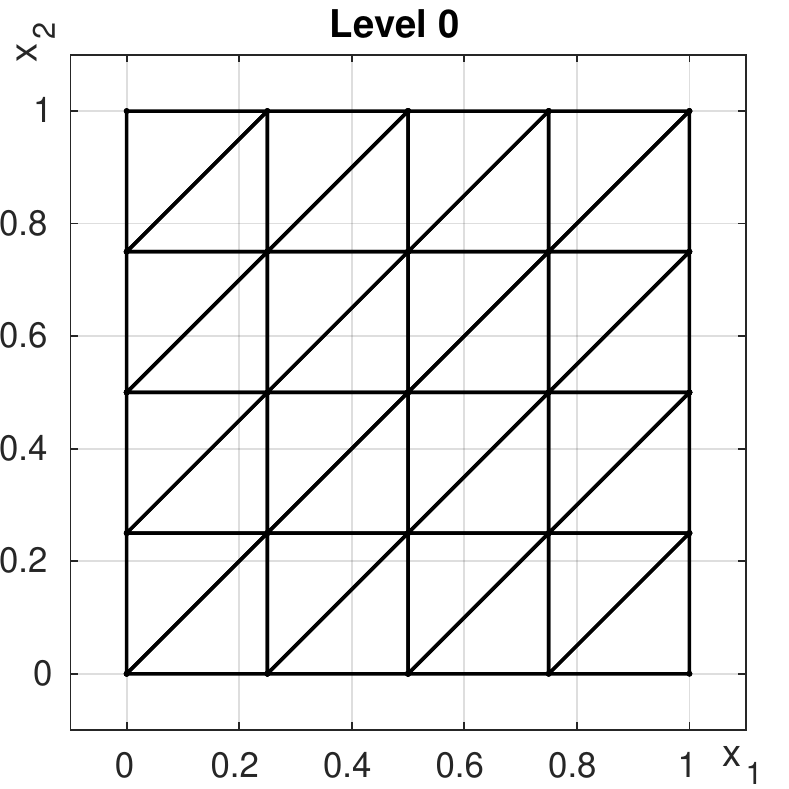}
		\includegraphics[scale=0.6]{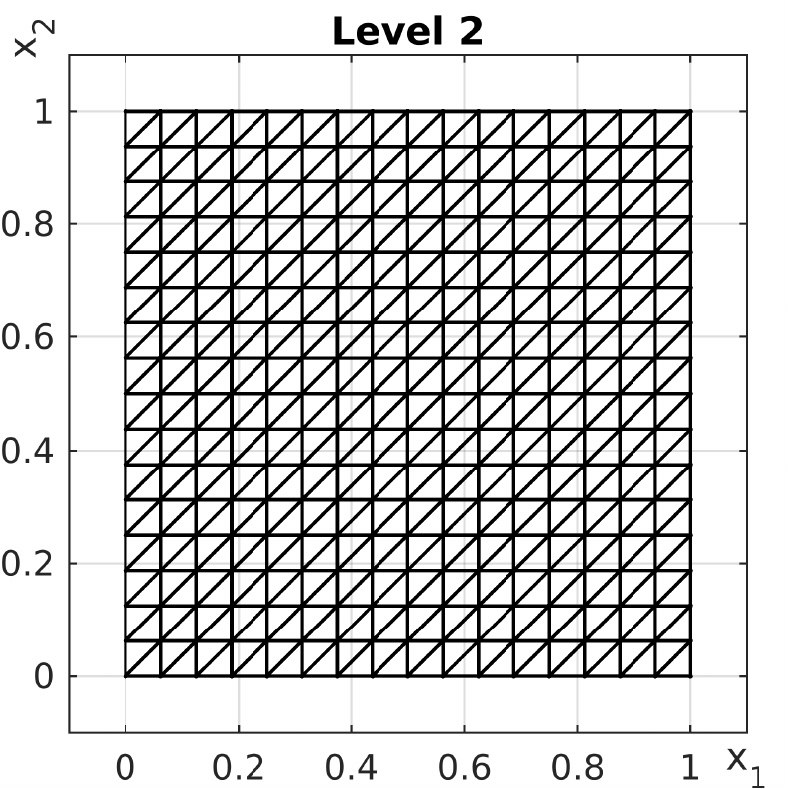}
		\caption{Spatial meshes $\mathcal T^x_\nu$: Initial mesh $\mathcal T^x_0$ and the mesh $\mathcal T^x_2$ after two uniform refinements. }
		\label{Num:Fig:NetzOrt}
	\end{center}
\end{figure}
We investigate the terminal times $T \in \left\{ \sqrt{2}, \frac 3 2 \right\}$ to examine the CFL condition~\eqref{FEM:WithoutHT:CFLQuadrat} of the Galerkin--Petrov finite element method~\eqref{FEM:DiskVFOhneHT}. The temporal meshes $\mathcal T^t_\alpha$ are defined by $t_l = \frac{Tl}{N^t_\alpha}$ for $l=0,\dots,N^t_\alpha,$ where $N^t_\alpha = 5 \cdot 2^\alpha$, $\alpha=0,\dots,4$. Note that for this choice of spatial and temporal meshes, we are in the framework of the CFL condition~\eqref{FEM:WithoutHT:CFLQuadrat}.

In the following numerical examples, we measure the error of the space-time finite element methods~\eqref{FEM:DiskVFOhneHT} and \eqref{FEM:WithHT:DiskVFMitHT} in the space-time norms $\norm{\cdot}_{L^2(Q)}$ and $\abs{\cdot}_{H^{\cu;1}(Q)}$. In particular, we state the numerical results for $\norm{\vec A - \vec A_h}_{L^2(Q)}$ and
\begin{equation*}
	\abs{\vec A - \vec A_h}_{H^{\cu;1}(Q)} = \left( \norm{\partial_t\vec A - \partial_t \vec A_h}_{L^2(Q)}^2 + \norm{\cu_x \vec A - \cu_x \vec A_h}_{L^2(Q)}^2 \right)^{1/2},
\end{equation*}
where $\vec A \in H^{\cu;1}_{0;0,} (Q)$ is the solution of the variational formulation~\eqref{VF:WaveVec}, and $\vec A_h \in S_{0,}^1(\mathcal T^t_\alpha) \otimes \mathcal N_\mathrm{I,0}^0(\mathcal T^x_\nu)$ is the solution of space-time finite element method~\eqref{FEM:DiskVFOhneHT} or \eqref{FEM:WithHT:DiskVFMitHT}. For the vectorial wave equation~\eqref{Einf:Maxwell} and its variational formulation~\eqref{VF:WaveVec}, we use the manufactured solution
\begin{equation}  \label{Num:Lsg}
	\vec A (t,x_1,x_2) = \begin{pmatrix}
		A_1(t,x_1,x_2) \\
		A_2(t,x_1,x_2)
	\end{pmatrix}
	=
	\begin{pmatrix}
		-5 t^2 x_2 (1-x_2) \\
		t^2 x_1 (1-x_1)
	\end{pmatrix}+ t^3
	\begin{pmatrix}
		\sin(\pi x_1) x_2 (1-x_2) \\
		0
	\end{pmatrix}
\end{equation}
for $(t,x_1,x_2) \in \overline{Q},$ which is, for comparision, also investigated in \cite{HauserOhm2023}. This solution satisfies the homogeneous Dirichlet condition $\trt \vec A (t,x_1,x_2) = \vec A_{|\Sigma}(t,x_1,x_2) \times \vec n_x(x_1,x_2) = 0$ for $(t,x_1,x_2) \in \Sigma$ as well as the homogeneous initial conditions
$\vec A(0,x_1,x_2) = \partial_t \vec A(0,x_1,x_2) = 0$ for $(x_1,x_2) \in \Omega.$
The related right-hand side $\vec j$ is given by
\begin{equation*}
	\vec j(t,x_1,x_2) = \begin{pmatrix}
		-10 (t^2-x_2^2+x_2)  \\
		2 (t^2-x_1^2+x_1)
	\end{pmatrix}
	+
	\begin{pmatrix}
		2t^3\sin(\pi x_1) +6t\sin(\pi x_1) x_2 (1-x_2) \\
		\pi t^3(1-2x_2)\cos(\pi x_1)
	\end{pmatrix}, \; (t,x_1,x_2) \in Q.
\end{equation*}

For the computations, we use high-order quadrature rules to calculate the integrals for computing the projections $\Pi_h^0 \vec j$, $\Pi_h^{\mathcal{RT},1} \vec j$ in \eqref{FEM:WithoutHT:jProjKonst}, \eqref{FEM:WithoutHT:jProjRT}. The temporal matrices involving the modified Hilbert transformation $\mathcal H_T$, e.g. the matrices~\eqref{FEM:WithHT:TimeMatrices}, are assembled as proposed in \cite[Subsection~2.2]{ZankCMAM2021}, see also \cite{ZankInt2022} for further assembling strategies. All spatial matrices, e.g. the matrices~\eqref{FEM:WithoutHT:Ortsmatrizen}, are calculated with the help of the finite element library Netgen/NGSolve, see \url{www.ngsolve.org} and \cite{SchoeberlNetgen}. We solve the linear systems using the sparse direct solver  UMFPACK 5.7.1 \cite{Umfpack} in the standard configuration. All calculations, presented in this section, were performed on a PC with two Intel Xeon E5-2687W v4 CPUs 3.00 GHz, i.e. in sum 24 cores and 512 GB main memory.

\begin{table}[h!t]
	\begin{center}
		\begin{footnotesize}\begin{tabular}{r||c|c|c|c|c}
				\diagbox{$h_x$}{\vspace*{-.1cm}$h_t$} & 0.2828 & 0.1414 & 0.0707 & 0.0354 & 0.0177   \\
				\hline\hline
				0.1768 & 6.64e-01 & 6.53e-01 & 6.50e-01 & 6.49e-01 & 6.49e-01 \\
				0.0884 & 3.49e-01 & 3.27e-01 & 3.21e-01 & 3.20e-01 & 3.20e-01 \\
				0.0442 & 2.12e-01 & 1.74e-01 & 1.63e-01 & 1.60e-01 & 1.59e-01 \\
				0.0221 & 1.61e-01 & 1.06e-01 & 8.68e-02 & 8.13e-02 & 7.99e-02 \\
				0.0110 & 1.46e-01 & 8.04e-02 & 5.29e-02 & 4.34e-02 & 4.07e-02
		\end{tabular}\end{footnotesize}
		\caption{Interpolation errors in $\abs{\cdot}_{H^{\cu;1}(Q)}$ for $T=\sqrt{2}$, the unit square $\Omega$ and $\vec A$ in \eqref{Num:Lsg}.} \label{Num:Tab:IntH1Curl}
	\end{center}
\end{table}

\begin{table}[ht!]
	\begin{center}
		\begin{footnotesize}\begin{tabular}{r||c|c|c|c|c}
				\diagbox{$h_x$}{\vspace*{-.1cm}$h_t$} & 0.2828 & 0.1414 & 0.0707 & 0.0354 & 0.0177   \\
				\hline\hline
				0.1768 & 7.50e-02 & 7.49e-02 & 7.49e-02 & 7.49e-02 & 7.49e-02 \\
				0.0884 & 1.99e-02 & 1.93e-02 & 1.93e-02 & 1.93e-02 & 1.93e-02 \\
				0.0442 & 6.97e-03 & 4.96e-03 & 4.82e-03 & 4.81e-03 & 4.81e-03 \\
				0.0221 & 5.25e-03 & 1.74e-03 & 1.24e-03 & 1.20e-03 & 1.20e-03 \\
				0.0110 & 5.13e-03 & 1.31e-03 & 4.37e-04 & 3.11e-04 & 3.01e-04
		\end{tabular}\end{footnotesize}
		\caption{Interpolation errors in $\norm{\cdot}_{L^2(Q)}$ for $T=\sqrt{2}$, the unit square $\Omega$ and $\vec A$ in \eqref{Num:Lsg}.} \label{Num:Tab:IntL2}
	\end{center}
\end{table}

Last, in Table~\ref{Num:Tab:IntH1Curl}, Table~\ref{Num:Tab:IntL2}, we report the interpolation error in the norms $\abs{\cdot}_{H^{\cu;1}(Q)}$ and $\norm{\cdot}_{L^2(Q)}$ for the unit square $\Omega$ and $T=\sqrt{2}$ for the function $\vec A$ defined in \eqref{Num:Lsg}, where first-order convergence is observed in $\abs{\cdot}_{H^{\cu;1}(Q)}$ and second-order convergence is obtained in $\norm{\cdot}_{L^2(Q)}$.

%-------------------------------------------------------------------------------
\subsection{Galerkin--Petrov FEM} \label{Sec:Num:Ohne}

In this subsection, we investigate numerical examples for the Galerkin--Petrov finite element method~\eqref{FEM:DiskVFOhneHT} in the situation described at the beginning of this section.

\begin{table}[ht!]
	\begin{center}
		\begin{footnotesize}\begin{tabular}{r||c|c|c|c|c}
				\diagbox{$h_x$}{\vspace*{-.1cm}$h_t$} & 0.2828 & 0.1414 & 0.0707 & 0.0354 & 0.0177   \\
				\hline\hline
				0.1768 & 6.68e-01 & 6.55e-01 & 6.52e-01 & 6.52e-01 & 6.51e-01 \\
				0.0884 & 3.56e-01 & 9.03e-01 & 3.27e-01 & 3.26e-01 & 3.26e-01 \\
				0.0442 & 2.18e-01 & 7.22e-01 & 4.37e+03 & 1.64e-01 & 1.63e-01 \\
				0.0221 & 1.66e-01 & 2.19e-01 & 2.02e+04 & 8.75e+13 & 8.19e-02 \\
				0.0110 & 1.50e-01 & 9.82e-02 & 8.64e+03 & 5.02e+17 & 8.43e+21
		\end{tabular}\end{footnotesize}
		\caption{Errors in $\abs{\cdot}_{H^{\cu;1}(Q)}$ of the Galerkin--Petrov FEM~\eqref{FEM:DiskVFOhneHT} with the approximate right-hand side $\Pi_h^0 \vec j \in S^0(\mathcal T^t_\alpha) \otimes S_2^0(\mathcal T^x_\nu)$ for $T=\sqrt{2}$, the unit square $\Omega$ and $\vec A$ in \eqref{Num:Lsg}.} \label{Num:Tab:OhneConstH1Curl}
	\end{center}
\end{table}

\begin{table}[ht!]
	\begin{center}
		\begin{footnotesize}\begin{tabular}{r||c|c|c|c|c}
				\diagbox{$h_x$}{\vspace*{-.1cm}$h_t$} & 0.2828 & 0.1414 & 0.0707 & 0.0354 & 0.0177   \\
				\hline\hline
				0.1768 & 9.79e-02 & 9.73e-02 & 9.73e-02 & 9.73e-02 & 9.73e-02 \\
				0.0884 & 4.83e-02 & 4.95e-02 & 4.67e-02 & 4.67e-02 & 4.67e-02 \\
				0.0442 & 2.64e-02 & 2.47e-02 & 4.27e+01 & 2.33e-02 & 2.33e-02 \\
				0.0221 & 1.70e-02 & 1.22e-02 & 1.09e+02 & 4.55e+11 & 1.17e-02 \\
				0.0110 & 1.37e-02 & 6.63e-03 & 2.69e+01 & 1.56e+15 & 2.28e+19
		\end{tabular}\end{footnotesize}
		\caption{Errors in $\norm{\cdot}_{L^2(Q)}$ of the Galerkin--Petrov FEM~\eqref{FEM:DiskVFOhneHT} with the approximate right-hand side $\Pi_h^0 \vec j \in S^0(\mathcal T^t_\alpha) \otimes S_2^0(\mathcal T^x_\nu)$ for $T=\sqrt{2}$, the unit square $\Omega$ and $\vec A$ in \eqref{Num:Lsg}.} \label{Num:Tab:OhneConstL2}
	\end{center}
\end{table}

\begin{table}[ht!]
	\begin{center}
		\begin{footnotesize}\begin{tabular}{r||c|c|c|c|c}
				\diagbox{$h_x$}{\vspace*{-.1cm}$h_t$} & 0.2828 & 0.1414 & 0.0707 & 0.0354 & 0.0177   \\
				\hline\hline
				0.1768 & 6.38e-01 & 6.26e-01 & 6.23e-01 & 6.22e-01 & 6.22e-01 \\
				0.0884 & 3.42e-01 & 8.26e-01 & 3.10e-01 & 3.09e-01 & 3.08e-01 \\
				0.0442 & 2.16e-01 & 9.97e-01 & 3.70e+03 & 1.55e-01 & 1.54e-01 \\
				0.0221 & 1.72e-01 & 1.06e+00 & 1.81e+04 & 6.10e+13 & 7.73e-02 \\
				0.0110 & 1.59e-01 & 1.24e+00 & 1.91e+04 & 5.71e+18 & 3.25e+21
		\end{tabular}\end{footnotesize}
		\caption{Errors in $\abs{\cdot}_{H^{\cu;1}(Q)}$ of the Galerkin--Petrov FEM~\eqref{FEM:DiskVFOhneHT} with the approximate right-hand side $\Pi_h^{\mathcal{RT},1} \vec j \in S^1(\mathcal T^t_\alpha) \otimes \mathcal{RT}^0(\mathcal T^x_\nu)$ for $T=\sqrt{2}$, the unit square $\Omega$ and $\vec A$ in \eqref{Num:Lsg}.} \label{Num:Tab:OhneRTH1Curl}
	\end{center}
\end{table}

\begin{table}[ht!]
	\begin{center}
		\begin{footnotesize}\begin{tabular}{r||c|c|c|c|c}
				\diagbox{$h_x$}{\vspace*{-.1cm}$h_t$} & 0.2828 & 0.1414 & 0.0707 & 0.0354 & 0.0177   \\
				\hline\hline
				0.1768 & 4.67e-02 & 4.29e-02 & 4.21e-02 & 4.19e-02 & 4.19e-02 \\
				0.0884 & 1.80e-02 & 1.93e-02 & 1.06e-02 & 1.04e-02 & 1.04e-02 \\
				0.0442 & 1.27e-02 & 1.32e-02 & 3.61e+01 & 2.66e-03 & 2.61e-03 \\
				0.0221 & 1.18e-02 & 1.36e-02 & 1.01e+02 & 3.17e+11 & 6.64e-04 \\
				0.0110 & 1.16e-02 & 1.53e-02 & 8.53e+01 & 1.74e+16 & 8.78e+18
		\end{tabular}\end{footnotesize}
		\caption{Errors in $\norm{\cdot}_{L^2(Q)}$ of the Galerkin--Petrov FEM~\eqref{FEM:DiskVFOhneHT} with the approximate right-hand side $\Pi_h^{\mathcal{RT},1} \vec j \in S^1(\mathcal T^t_\alpha) \otimes \mathcal{RT}^0(\mathcal T^x_\nu)$ for $T=\sqrt{2}$, the unit square $\Omega$ and $\vec A$ in \eqref{Num:Lsg}.} \label{Num:Tab:OhneRTL2}
	\end{center}
\end{table}

First, we consider the terminal time $T=\sqrt{2}$. In Table~\ref{Num:Tab:OhneConstH1Curl} and Table~\ref{Num:Tab:OhneConstL2}, we present the numerical results for the Galerkin--Petrov finite element method~\eqref{FEM:DiskVFOhneHT} with the approximate right-hand side $\Pi_h^0 \vec j \in S^0(\mathcal T^t_\alpha) \otimes S_2^0(\mathcal T^x_\nu)$ of Subsection~\ref{Sec:FEM:WithoutHT:ProjKonst}. In Table~\ref{Num:Tab:OhneRTH1Curl} and Table~\ref{Num:Tab:OhneRTL2}, we report the results for $\Pi_h^{\mathcal{RT},1} \vec j  \in S^1(\mathcal T^t_\alpha) \otimes \mathcal{RT}^0(\mathcal T^x_\nu)$ of Subsection~\ref{Sec:FEM:WithoutHT:ProjRT}. All tables show conditional stability, i.e. the CFL condition~\eqref{FEM:WithoutHT:CFLQuadrat} is required for stability. Note that the ratio of the mesh sizes $h_t=0.0177$ and $h_x=0.0221$, i.e. the last column and second last row of Tables~\ref{Num:Tab:OhneConstH1Curl} to \ref{Num:Tab:OhneRTL2}, is given by
\begin{equation*}
	\frac{h_t}{h_x} \approx \frac{0.0177}{0.0221} \approx 0.801
\end{equation*}
and thus, fulfills the CFL condition~\eqref{FEM:WithoutHT:CFLQuadrat} resulting in a stable method. In the case of stability, we observe first-order convergence in $\abs{\cdot}_{H^{\cu;1}(Q)}$, where the errors in Table~\ref{Num:Tab:IntH1Curl}, Table~\ref{Num:Tab:OhneConstH1Curl} and Table~\ref{Num:Tab:OhneRTH1Curl} are within the same range. When considering the errors in $\norm{\cdot}_{L^2(Q)}$, Table~\ref{Num:Tab:OhneConstL2} reports only first-order convergence, whereas in Table~\ref{Num:Tab:OhneRTL2}, second-order convergence is observed as in  Table~\ref{Num:Tab:IntL2} on the interpolation error.

Next, we elaborate on this convergence behavior by investigating the difference $|\vec A(T,x)- \vec A_h(T,x)|$ for $x \in \Omega$. For the mesh sizes $h_t =  0.0707 $, $h_x =  0.0884$, we plot the function $\Omega \ni x \mapsto |\vec A(T,x)- \vec A_h(T,x)|$ in Figure~\ref{Num:Fig:SpuriousModes} for both projections $\Pi_h^0 $ and $\Pi_h^{\mathcal{RT},1}.$ In the literature, we find the term \textit{spurious solutions}, e.g. in \cite{Jin}, or \textit{spurious modes}, e.g. in \cite{bossavit1990}. Spurious modes are parts of the numerical solution, which correspond to eigensolutions of the discrete differential operator. They are oscillations that should not be part of the computed solution. Spurious modes can be understood as numerically generated noise and have no physical meaning. An example can be found in  \cite[Figure~5.8]{Jin}. In Figure~\ref{Num:Fig:SpuriousModes}, we obtain a similar noise behavior that occurs for the projection $\Pi_h^0 \vec j$ of the right-hand side. Here, we observe some of the zero eigensolutions of the curl-curl operator added to the solution. Hence, we get a worse $L^2(Q)$-error, see Table~\ref{Num:Tab:OhneConstL2} and Table~\ref{Num:Tab:OhneRTL2}, but a similar error behavior in the $\abs{\cdot}_{H^{\cu;1}(Q)}$-norm for both projections if the CFL condition is met, see Table~\ref{Num:Tab:OhneConstH1Curl} and Table~\ref{Num:Tab:OhneRTH1Curl}.

\begin{figure}[ht]
	\begin{center}
		\includegraphics[scale=0.27]{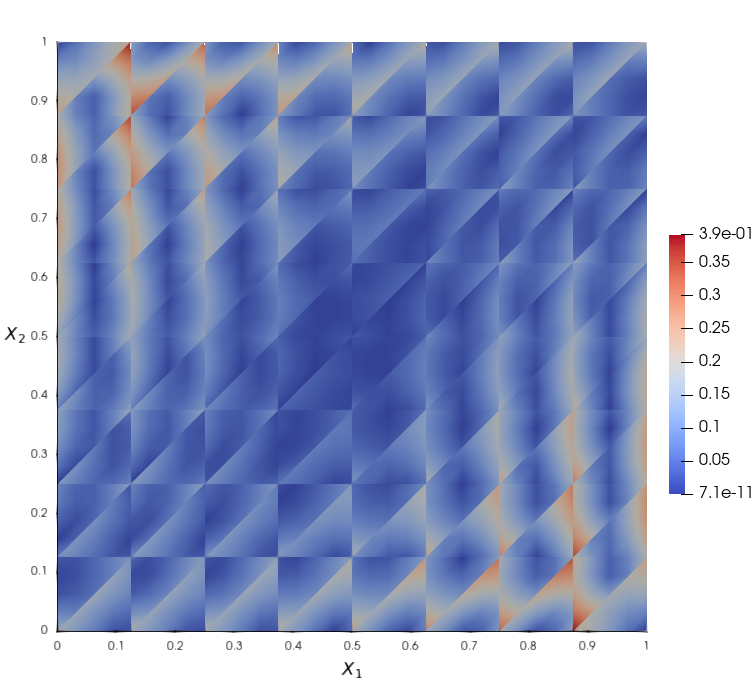}\hspace*{.2cm}
		\includegraphics[scale=0.27]{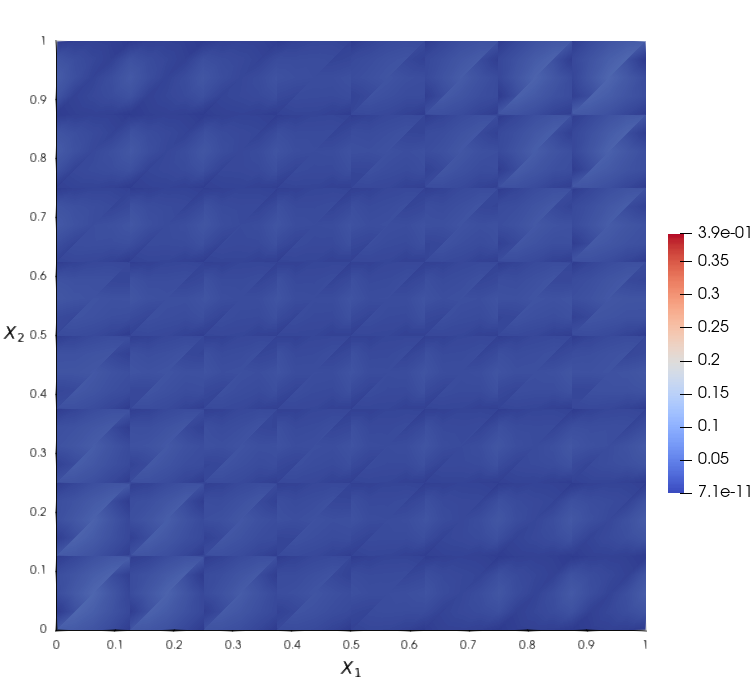}
		\caption{The magnitude of the difference $\abs{\vec A(T,\cdot)- \vec A_h(T,\cdot)}$ for $T=\sqrt{2}$, $\nu = 1$, $\alpha=2$, displayed over $\Omega$ for $\Pi_h^0 \vec j \in S^0(\mathcal T^t_\alpha) \otimes S_2^0(\mathcal T^x_\nu)$ (left) and $\Pi_h^{\mathcal{RT},1} \vec j \in S^1(\mathcal T^t_\alpha) \otimes \mathcal{RT}^0(\mathcal T^x_\nu)$ (right).}
		\label{Num:Fig:SpuriousModes}
	\end{center}
\end{figure}

To summarize, the approximation $\Pi_h^0 \vec j \in S^0(\mathcal T^t_\alpha) \otimes S_2^0(\mathcal T^x_\nu)$ of $\vec j$ is good enough, when the error is measured in $\abs{\cdot}_{H^{\cu;1}(Q)}$, while the better approximation $\Pi_h^{\mathcal{RT},1} \vec j  \in S^1(\mathcal T^t_\alpha) \otimes \mathcal{RT}^0(\mathcal T^x_\nu)$ of $\vec j$ is needed for optimal convergence rates in $\norm{\cdot}_{L^2(Q)}.$ 

Second, we examine the terminal time $T=\frac 3 2$. In Table~\ref{Num:Tab:OhneT32RTH1Curl}, Table~\ref{Num:Tab:OhneT32RTL2}, the numerical results for the Galerkin--Petrov finite element method~\eqref{FEM:DiskVFOhneHT} with the approximate right-hand side $\Pi_h^{\mathcal{RT},1} \vec j  \in S^1(\mathcal T^t_\alpha) \otimes \mathcal{RT}^0(\mathcal T^x_\nu)$ of Subsection~\ref{Sec:FEM:WithoutHT:ProjRT} show that slightly violating the CFL condition~\eqref{FEM:WithoutHT:CFLQuadrat} leads to instability. More precisely, the ratio of the mesh sizes $h_t=0.0188$ and $h_x=0.0221$, i.e. the last column and second last row of Table~\ref{Num:Tab:OhneT32RTH1Curl}, Table~\ref{Num:Tab:OhneT32RTL2}, is given by
\begin{equation*}
	\frac{h_t}{h_x} \approx \frac{0.0188}{0.0221} \approx 0.851
\end{equation*}
and thus, violates the CFL condition~\eqref{FEM:WithoutHT:CFLQuadrat} resulting in an unstable method. In other words, the CFL condition~\eqref{FEM:WithoutHT:CFLQuadrat} seems to be sharp for this particular situation.

\begin{table}[ht!]
	\begin{center}
		\begin{footnotesize}\begin{tabular}{r||c|c|c|c|c}
				\diagbox{$h_x$}{\vspace*{-.1cm}$h_t$} & 0.3000 & 0.1500 & 0.0750 & 0.0375 & 0.0188   \\
				\hline\hline
				0.1768 & 7.31e-01 & 7.18e-01 & 7.16e-01 & 7.15e-01 & 7.15e-01 \\
				0.0884 & 3.90e-01 & 1.09e+00 & 3.57e-01 & 3.55e-01 & 3.55e-01 \\
				0.0442 & 2.44e-01 & 1.24e+00 & 6.37e+03 & 4.40e-01 & 1.77e-01 \\
				0.0221 & 1.92e-01 & 1.37e+00 & 2.28e+04 & 9.80e+14 & 1.29e+04 \\
				0.0110 & 1.77e-01 & 1.58e+00 & 2.44e+04 & 5.36e+17 & 7.84e+21
		\end{tabular}\end{footnotesize}
		\caption{Errors in $\abs{\cdot}_{H^{\cu;1}(Q)}$ of the Galerkin--Petrov FEM~\eqref{FEM:DiskVFOhneHT} with the approximate right-hand side $\Pi_h^{\mathcal{RT},1} \vec j \in S^1(\mathcal T^t_\alpha) \otimes \mathcal{RT}^0(\mathcal T^x_\nu)$ for $T=\frac 3 2$, the unit square $\Omega$ and $\vec A$ in \eqref{Num:Lsg}.} \label{Num:Tab:OhneT32RTH1Curl}
	\end{center}
\end{table}

\begin{table}[ht!]
	\begin{center}
		\begin{footnotesize}\begin{tabular}{r||c|c|c|c|c}
				\diagbox{$h_x$}{\vspace*{-.1cm}$h_t$} & 0.3000 & 0.1500 & 0.0750 & 0.0375 & 0.0188  \\
				\hline\hline
				0.1768 & 5.45e-02 & 5.06e-02 & 4.99e-02 & 4.97e-02 & 4.97e-02 \\
				0.0884 & 2.05e-02 & 2.49e-02 & 1.26e-02 & 1.24e-02 & 1.23e-02 \\
				0.0442 & 1.43e-02 & 1.69e-02 & 6.28e+01 & 4.40e-03 & 3.09e-03 \\
				0.0221 & 1.33e-02 & 1.85e-02 & 1.30e+02 & 5.17e+12 & 4.92e+01 \\
				0.0110 & 1.31e-02 & 2.05e-02 & 1.16e+02 & 1.67e+15 & 2.16e+19
		\end{tabular}\end{footnotesize}
		\caption{Errors in $\norm{\cdot}_{L^2(Q)}$ of the Galerkin--Petrov FEM~\eqref{FEM:DiskVFOhneHT} with the approximate right-hand side $\Pi_h^{\mathcal{RT},1} \vec j \in S^1(\mathcal T^t_\alpha) \otimes \mathcal{RT}^0(\mathcal T^x_\nu)$ for $T=\frac 3 2$, the unit square $\Omega$ and $\vec A$ in \eqref{Num:Lsg}.} \label{Num:Tab:OhneT32RTL2}
	\end{center}
\end{table}

%-------------------------------------------------------------------------------
\subsection{Galerkin--Bubnov FEM}

In this subsection, we report on numerical results for the Galerkin--Bubnov finite element method~\eqref{FEM:WithHT:DiskVFMitHT} using the modified Hilbert transformation in the situation described at the beginning of this section. We only show numerical results for the terminal time $T = \sqrt{2}$, as those for $T=\frac 3 2$ are similar.

\begin{table}[ht!]
	\begin{center}
		\begin{footnotesize}\begin{tabular}{r||c|c|c|c|c}
				\diagbox{$h_x$}{\vspace*{-.1cm}$h_t$} & 0.2828 & 0.1414 & 0.0707 & 0.0354 & 0.0177   \\
				\hline\hline
				0.1768 & 6.77e-01 & 6.58e-01 & 6.53e-01 & 6.52e-01 & 6.51e-01 \\
				0.0884 & 3.72e-01 & 3.36e-01 & 3.28e-01 & 3.26e-01 & 3.26e-01 \\
				0.0442 & 2.41e-01 & 1.83e-01 & 1.68e-01 & 1.64e-01 & 1.63e-01 \\
				0.0221 & 1.95e-01 & 1.16e-01 & 9.13e-02 & 8.40e-02 & 8.21e-02 \\
				0.0110 & 1.81e-01 & 9.25e-02 & 5.79e-02 & 4.56e-02 & 4.20e-02
		\end{tabular}\end{footnotesize}
		\caption{Errors in $\abs{\cdot}_{H^{\cu;1}(Q)}$ of the Galerkin--Bubnov FEM~\eqref{FEM:WithHT:DiskVFMitHT} with the approximate right-hand side $\Pi_h^0 \vec j \in S^0(\mathcal T^t_\alpha) \otimes S_2^0(\mathcal T^x_\nu)$ for $T=\sqrt{2}$, the unit square $\Omega$ and $\vec A$ in \eqref{Num:Lsg}.} \label{Num:Tab:MitConstH1Curl}
	\end{center}
\end{table}

\begin{table}[ht!]
	\begin{center}
		\begin{footnotesize}\begin{tabular}{r||c|c|c|c|c}
				\diagbox{$h_x$}{\vspace*{-.1cm}$h_t$} & 0.2828 & 0.1414 & 0.0707 & 0.0354 & 0.0177   \\
				\hline\hline
				0.1768 & 1.03e-01 & 9.61e-02 & 9.70e-02 & 9.73e-02 & 9.73e-02 \\
				0.0884 & 5.20e-02 & 4.61e-02 & 4.65e-02 & 4.67e-02 & 4.67e-02 \\
				0.0442 & 2.98e-02 & 2.33e-02 & 2.32e-02 & 2.32e-02 & 2.33e-02 \\
				0.0221 & 2.08e-02 & 1.22e-02 & 1.16e-02 & 1.16e-02 & 1.16e-02 \\
				0.0110 & 1.79e-02 & 7.12e-03 & 5.91e-03 & 5.84e-03 & 5.83e-03
		\end{tabular}\end{footnotesize}
		\caption{Errors in $\norm{\cdot}_{L^2(Q)}$ of the Galerkin--Bubnov FEM~\eqref{FEM:WithHT:DiskVFMitHT} with the approximate right-hand side $\Pi_h^0 \vec j \in S^0(\mathcal T^t_\alpha) \otimes S_2^0(\mathcal T^x_\nu)$ for $T=\sqrt{2}$, the unit square $\Omega$ and $\vec A$ in \eqref{Num:Lsg}.} \label{Num:Tab:MitConstL2}
	\end{center}
\end{table}

\begin{table}[ht!]
	\begin{center}
		\begin{footnotesize}\begin{tabular}{r||c|c|c|c|c}
				\diagbox{$h_x$}{\vspace*{-.1cm}$h_t$} & 0.2828 & 0.1414 & 0.0707 & 0.0354 & 0.0177   \\
				\hline\hline
				0.1768 & 6.45e-01 & 6.27e-01 & 6.23e-01 & 6.23e-01 & 6.22e-01 \\
				0.0884 & 3.62e-01 & 3.19e-01 & 3.11e-01 & 3.09e-01 & 3.08e-01 \\
				0.0442 & 2.48e-01 & 1.75e-01 & 1.59e-01 & 1.55e-01 & 1.54e-01 \\
				0.0221 & 2.10e-01 & 1.13e-01 & 8.72e-02 & 7.95e-02 & 7.75e-02 \\
				0.0110 & 2.00e-01 & 9.14e-02 & 5.63e-02 & 4.36e-02 & 3.98e-02
		\end{tabular}\end{footnotesize}
		\caption{Errors in $\abs{\cdot}_{H^{\cu;1}(Q)}$ of the Galerkin--Bubnov FEM~\eqref{FEM:WithHT:DiskVFMitHT} with the approximate right-hand side $\Pi_h^{\mathcal{RT},1} \vec j \in S^1(\mathcal T^t_\alpha) \otimes \mathcal{RT}^0(\mathcal T^x_\nu)$ for $T=\sqrt{2}$, the unit square $\Omega$ and $\vec A$ in \eqref{Num:Lsg}.} \label{Num:Tab:MitRTH1Curl}
	\end{center}
\end{table}

\begin{table}[ht!]
	\begin{center}
		\begin{footnotesize}\begin{tabular}{r||c|c|c|c|c}
				\diagbox{$h_x$}{\vspace*{-.1cm}$h_t$} & 0.2828 & 0.1414 & 0.0707 & 0.0354 & 0.0177   \\
				\hline\hline
				0.1768 & 5.28e-02 & 4.23e-02 & 4.20e-02 & 4.19e-02 & 4.19e-02 \\
				0.0884 & 2.70e-02 & 1.10e-02 & 1.05e-02 & 1.04e-02 & 1.04e-02 \\
				0.0442 & 2.28e-02 & 3.99e-03 & 2.75e-03 & 2.61e-03 & 2.59e-03 \\
				0.0221 & 2.21e-02 & 2.91e-03 & 9.92e-04 & 6.85e-04 & 6.52e-04 \\
				0.0110 & 2.19e-02 & 2.79e-03 & 7.22e-04 & 2.46e-04 & 1.71e-04
		\end{tabular}\end{footnotesize}
		\caption{Errors in $\norm{\cdot}_{L^2(Q)}$ of the Galerkin--Bubnov FEM~\eqref{FEM:WithHT:DiskVFMitHT} with the approximate right-hand side $\Pi_h^{\mathcal{RT},1} \vec j \in S^1(\mathcal T^t_\alpha) \otimes \mathcal{RT}^0(\mathcal T^x_\nu)$ for $T=\sqrt{2}$, the unit square $\Omega$ and $\vec A$ in \eqref{Num:Lsg}.} \label{Num:Tab:MitRTL2}
	\end{center}
\end{table}

We observe unconditional stability in Table~\ref{Num:Tab:MitConstH1Curl}, Table~\ref{Num:Tab:MitConstL2}, Table~\ref{Num:Tab:MitRTH1Curl}, Table~\ref{Num:Tab:MitRTL2}, i.e. no CFL condition is needed. This is the main difference between the results in this subsection and the results of Subsection~\ref{Sec:Num:Ohne}, where for stability, a CFL condition is required.

Besides the stability issue, the errors in Table~\ref{Num:Tab:MitConstH1Curl}, Table~\ref{Num:Tab:MitConstL2}, Table~\ref{Num:Tab:MitRTH1Curl}, Table~\ref{Num:Tab:MitRTL2} are comparable with the errors of the previous Subsection~\ref{Sec:Num:Ohne} regarding the projection of the right-hand side. Thus, the approximation $\Pi_h^0 \vec j \in S^0(\mathcal T^t_\alpha) \otimes S_2^0(\mathcal T^x_\nu)$ of $\vec j$ is good enough to get first-order convergence in $\abs{\cdot}_{H^{\cu;1}(Q)}$, see Table~\ref{Num:Tab:MitConstH1Curl}. In Table~\ref{Num:Tab:MitRTL2}, we  again see that the better approximation $\Pi_h^{\mathcal{RT},1} \vec j \in S^1(\mathcal T^t_\alpha) \otimes \mathcal{RT}^0(\mathcal T^x_\nu)$ of $\vec j$ is needed for optimal convergence rates in $\norm{\cdot}_{L^2(Q)}.$

%-------------------------------------------------------------------------------
\section{Conclusion} \label{Sec:Zum}

In this paper, we presented two conforming space-time finite element methods for the vectorial wave equation. First, we stated a variational formulation with different trial and test spaces. Its conforming discretization with piecewise multilinear functions leads to a Galerkin--Petrov method, which is only conditionally stable, i.e. a CFL condition is required. For a particular choice of spatial meshes, we stated the CFL condition, where numerical examples showed its sharpness. Second, to tackle the problem of a CFL condition, we introduced a variational formulation for the vectorial wave equation with equal trial and test spaces using the modified Hilbert transformation $\mathcal H_T$. We gave a rigorous derivation of this variational setting. A conforming discretization with piecewise multilinear functions of this new variational approach results in a space-time Galerkin--Bubnov method. All numerical examples showed the unconditional stability of this conforming space-time method. 
Further, we investigated the influence of projections of the right-hand side on the convergence. In numerical examples for both the Galerkin--Petrov method and the Galerkin--Bubnov method, we observed only first-order convergence in $\norm{\cdot}_{L^2(Q)}$ when the right-hand side was projected onto piecewise constants.  On the other side, second-order convergence was obtained when the right-hand side was projected onto piecewise linear functions in time and lowest-order Raviart--Thomas functions in space.

For both presented conforming space-time approaches, a generalization to piecewise polynomials of higher-order is possible, see \cite{HauserOhm2023}. Further, the fast solvers \cite{LangerZankSISC2021, ZankWelleLoeser, ZankWaermeLoeserICOSAHOM2022} for the resulting linear systems, where some allow for time parallelization, are also applicable. In addition, other time-dependent problems in electromagnetics can be handled with the presented variational frameworks, which are topics of future work, see e.g. \cite{HauserOhm2023}.

%-------------------------------------------------------------------------------
\bibliographystyle{acm}
\bibliography{references}
\end{document}